\newtheorem{theo}{Theorem}[section]
\newtheorem{coro}[theo]{Corollary}
\newtheorem{lemm}[theo]{Lemma}
\newtheorem{rema}[theo]{Remark}
\newtheorem{defi}[theo]{Definition}
\numberwithin{equation}{section}
\title{Stability  for  Time-domain Elastic Wave Equations}
\author{
Bochao Chen\thanks{School of Mathematics and Statistics, Center for
Mathematics and Interdisciplinary Sciences, Northeast Normal University,
Changchun, Jilin 130024, China. This author's research was supported in part by NSFC grants (project number, 11901232) and the Fundamental Research Funds for the Central Universities (project number, 2412022QD032). ({\tt
chenbc758@nenu.edu.cn})} \and
Yixian Gao\thanks{School of Mathematics and Statistics, Center for
Mathematics and Interdisciplinary Sciences, Northeast Normal University,
Changchun, Jilin 130024, China. This author's research was supported in part by NSFC grants (project number, 11871140) and National Key R\&D Program of China (project number, 2020YFA0714102). ({\tt
gaoyx643@nenu.edu.cn})}
\and Shuguan Ji\thanks{School of Mathematics and Statistics, Center for
Mathematics and Interdisciplinary Sciences, Northeast Normal University,
Changchun, Jilin 130024, China. This author's research was supported in part by NSFC grants (project numbers, 11671071 and 12071065). ({\tt jisg100@nenu.edu.cn})}
\and Yang Liu\thanks{School of Mathematics and Statistics, Center for
Mathematics and Interdisciplinary Sciences, Northeast Normal University,
Changchun, Jilin 130024, China. ({\tt liuy694@nenu.edu.cn})}}
\begin{document}

\maketitle

\begin{abstract}
This paper is concerned with  the inverse  scattering  problem involving the time-domain elastic wave equations
 in a bounded  $d$-dimensional domain.
First, an explicit reconstruction formula for the density  is established  by means of the  Dirichlet-to-Neumann operator.
The reconstruction is mainly based on the  modified boundary control method and complex geometric optics solutions for the elastic wave.
Next, the stable observability is  obtained by a Carleman estimate. Finally,  the stability for the density is presented by the connect operator.
\end{abstract}

\begin{keywords}
Elastic wave equations, Boundary control method, Complex geometric optics solutions, Carleman estimate
\end{keywords}

\begin{AMS}
35R30, 35B35
\end{AMS}

\pagestyle{myheadings}
\thispagestyle{plain}
\markboth{B. Chen, Y. Gao, S. Ji and Y. Liu}{Stability for the  Time-domain  Elastic Wave Equations}

\section{Introduction}

The inverse scattering problem of  bounded structures for acoustic, electromagnetic, and elastic waves has aroused the interest of physicists, engineers, and applied mathematicians, and occurs in significant applications in various scientific areas, for example, in optics, acoustics, radio wave propagation, radar techniques, geophysical prospecting, and medical imaging. We refer to  \cite{colton2019} for details of these applications.

Acoustic and elastic wave equations are two fundamental equations to describe the wave propagation. Most of the studies in the literature are devoted to the inverse scattering problem for acoustic wave equations, see \cite{Beretta2016,Belish1987, Li2021,lizhao2020,Beilina2018,Gao2018}. Different from the acoustic wave, the elastic wave, such as the seismic wave, is composed of the shear wave and the compressional wave. In practice, we have to face the unpredictability of the environments and the lack of knowledge of elastic waves. Hence the inverse scattering problem for elastic wave equations are more complicated. This makes our problem more meaningful and challenging.
In recent years, the research on elastic waves has become more and more extensive.
The global uniqueness and stability results for the inverse medium problem of time-harmonic elastic wave equations have  been established by  \cite{barcelo2018} and \cite{Beretta2017}.
The relevant results of homogeneous isotropic elastic wave equations can be referred to \cite{Bardos1992,Lagnese1983,Beli2002}. In \cite{Ikehata1998}, the authors studied the inverse problem of determining the force term or the density from a finite number of measurements of lateral boundary data. In \cite{Chen2003}, the authors obtained the Lipschitz stability for a principally diagonal hyperbolic system by a Carleman estimate. Recently, the uniqueness result associated with nonlinear isotropic elastic equations was given by \cite{de2020,Uhlmann2021}.
To our best knowledge, there are no stability results in the existing literature when the density of the time-domain elastic system is reconstructed.

In this paper, we focus on a time-dependent elastic wave equation with variable coefficients in a bounded domain, and establishing the stability result for its density  from the associated active measurement. For this, we have to consider the uniqueness and stability of the corresponding inverse scattering problem. There are many methods to prove the uniqueness and stability of inverse scattering problems, such as the boundary control (BC) method introduced by \cite{Belish1987}, complex geometric optics (CGO) solutions originated from \cite{G1987} and so on. Generally, by using either of the two methods mentioned above, we can only obtain a logarithmic stability estimate, see \cite{tatru1995,Mandache2001}. A further generalization on a Lipschitz stability estimate for the wave speed has been given by \cite{liu2016} thanks to modified BC method together with incorporates features from the CGO solutions approach. Motivated by \cite{liu2016}, we state a reconstruction formula for the density and give the corresponding Lipschitz stability  via known boundary measurements modelled by  Dirichlet-to-Neumann operators.
The construction formula depends on the Blagove\v{s}\v{c}enski\u{\i} type identities and the CGO solutions corresponding to the elastic case, and then can be applied to obtain a Lipschitz estimate for low frequencies. On the other hand,
because of the exponential constant in the Lipschitz inequality, we can get a  logarithmic stability for high frequencies. In following work, our arguments depend on the assumption on the stable observability for the elastic wave equation since the inverse medium problem is nonlinear.
Meanwhile, stable observability relies on a observability inequality of the elastic case, which is equivalent to the exact controllability of the elastic case. In \cite{Ala1999, Shang2018}, the authors just proved the exact controllability of anisotropic for the homogeneous elastodynamic system and the inhomogeneous case.
In our case, we need to establish a Carleman estimate to state the observability inequality and the stable observability for elastic case, and then give stability results for density.
 In addition, we have to refer to \cite{Chen2003,Ikehata1998,Imanuvilov2020}. The authors established a Carleman estimate for the  Lam\'{e} system with variable coefficients which is regarded as a principally diagonal hyperbolic system.
However, such a form of the Carleman estimate for a diagonal system cannot be combined with the boundary control operator we derived.
In order to give a more general form of the Carleman estimate for
a hyperbolic system with variable coefficients, we have to  require further additional but natural conditions on Lam\'{e} parameters.

This paper is organized as follows. Section \ref{2} aims at  presenting a precise mathematical formulation of the model  scattering problem for the elastic wave and introducing some notations which will be used throughout the paper.
The goal of Section \ref{3} is to reconstruct a formula associated with the density.  In Section \ref{4},
we give a Carleman estimate for the elastic system, and then prove the corresponding stable observability.
In Section \ref{5}, we establish the local  Lipschitz stability  for low frequencies   and the logarithmic stability estimate for  high frequencies, respectively.

\section{Problem formulation}\label{2}
In this section, we introduce a mathematical model and give some notations for the scattering problem in a bounded domain.
\subsection{Model problem}
Let $\Omega\subset \mathbb{R}^d, 3\leq d<\infty$ be a compact set  with smooth enough boundary $\partial\Omega$.
Consider the initial-boundary value problem of the time-domain elastic system
\begin{align}\label{ab}
\begin{cases}
 \partial_t^2\boldsymbol u(t, \boldsymbol x)-\mathcal L_\rho\boldsymbol u(t, \boldsymbol x)=\boldsymbol 0 \quad &\text{in}~(0, T)\times\Omega,\\
  \boldsymbol u(0, \boldsymbol x)=\boldsymbol 0,\quad \partial_t\boldsymbol u(0, \boldsymbol x)= \boldsymbol 0 \quad &\text{in}~\Omega ,\\
  \boldsymbol u(t, \boldsymbol x)=\boldsymbol f(t, \boldsymbol x) \quad &\text{on}~(0, T)\times\partial\Omega
 \end{cases}
\end{align}
with
\begin{align*}
\mathcal L_\rho\boldsymbol u:= \frac{1}{\rho}\left(\nabla\cdot(\mu(\nabla\boldsymbol u+\nabla\boldsymbol u^\top))+\nabla(\lambda\nabla\cdot\boldsymbol u)\right).
\end{align*}
Here
  $\rho:=\rho(\boldsymbol x)$   is the density with $\rho\in \mathcal C^\infty(\Omega)$, and $\mu:=\mu(\boldsymbol x), \lambda:=\lambda(\boldsymbol x)$ are Lam\'{e} parameters with $\mu,\lambda\in \mathcal C^1(\Omega)$, satisfying
\begin{align*}
d\lambda+2\mu>0.
\end{align*}
Moreover, there exist constants $\rho_1$, $\mu_i$ and $\lambda_i, i=0, 1$  such that
\begin{align*}
0<\rho\leq\rho_1, \quad 0<\mu_0\leq\mu\leq\mu_1,\quad \lambda_0\leq\lambda\leq \lambda_1.
\end{align*}

Denote by $\boldsymbol{u}(t,\boldsymbol x)=(u_1(t,\boldsymbol x), u_2(t,\boldsymbol x),\cdots, u_d(t,\boldsymbol x))^{\top}$ the displacement vector with  $t\in (0, T)$ and $\boldsymbol x=(x_1, x_2, \cdots,x_d)^{\top} \in  \Omega$.  The  gradient tensor $\nabla \boldsymbol u$ and the divergence $\nabla\cdot\boldsymbol u$ are, respectively, defined as
\begin{align*}
\nabla \boldsymbol u=
\begin{bmatrix}
\partial_{x_1}u_1& \partial_{x_2}u_1& \dots & \partial_{x_d}u_1\\
\partial_{x_1}u_2& \partial_{x_2}u_2& \dots & \partial_{x_d}u_2\\
\vdots& \vdots& \ddots&\vdots\\
\partial_{x_1}u_d& \partial_{x_2}u_d&\dots &  \partial_{x_d}u_d
\end{bmatrix},~\quad \nabla\cdot\boldsymbol u=\underset{i=1}{\overset{d}{\sum}}\partial_{x_i}u_i.
\end{align*}
Furthermore, $\nabla\boldsymbol u^\top$  denotes the transpose of  $\nabla\boldsymbol u$.

Denote by $\boldsymbol u_{f}(t, \boldsymbol x)=\boldsymbol u(t, \boldsymbol x)$ the solution of \eqref{ab} corresponding to the boundary condition $\boldsymbol f$ with  the compatibility condition $\boldsymbol f(0, \boldsymbol x)=\boldsymbol 0$. Let $\boldsymbol {\mathcal{C}}^{\infty} (\partial \Omega):={\mathcal{C}}^{\infty} (\partial \Omega)^d$ signify the Cartesian product space equipped with
the corresponding norm. Similarly, $\boldsymbol  H^1 (\partial \Omega):= H^1 (\partial \Omega)^d$ and $\boldsymbol L^2 (\partial \Omega):=L^2(\partial \Omega)^d.$
For $\boldsymbol f\in \boldsymbol {\mathcal{C}}^\infty_0((0, T)\times\partial\Omega)$,
where
\begin{align*}
 \boldsymbol {\mathcal{C}}^\infty_0((0, T)\times\partial\Omega):=\{\boldsymbol f\in \boldsymbol {\mathcal{C}}^{\infty}((0, T)\times \partial\Omega):\boldsymbol f(0, \boldsymbol x)=\boldsymbol 0\},
\end{align*}
define formally the displacement to traction map as
\begin{align*}
 \Lambda_{\rho, T}[\boldsymbol f]=T_{\boldsymbol \nu}\boldsymbol u_f,
\end{align*}
where $\boldsymbol \nu$ is the outward unit
normal vector at $\boldsymbol x\in \partial\Omega$,
and $T_{\boldsymbol \nu} \boldsymbol u_f$ is the surface traction given by
\begin{align*}
	T_{\boldsymbol \nu} \boldsymbol u_f:=\mu\partial_{\boldsymbol \nu} \boldsymbol u_f+\mu(\nabla\boldsymbol u_f)^\top\cdot\boldsymbol \nu+\lambda(\nabla\cdot \boldsymbol u_f)\boldsymbol \nu|_{((0, T)\times\partial\Omega)}.
\end{align*}
Indeed, if  taking $\boldsymbol f \in \boldsymbol  H_0^1((0, T)\times \partial\Omega)$,
where
\begin{align*}
\boldsymbol H_0^1((0, T)\times \partial\Omega):=\{\boldsymbol f\in \boldsymbol H^1((0, T)\times \partial\Omega):\boldsymbol f(0, \boldsymbol x)=\boldsymbol 0\},
\end{align*}
then it follows from  \cite{Lasi1986} that the
traction map
\[\Lambda_{\rho, T}: \boldsymbol H_0^1((0, T)\times \partial\Omega)\rightarrow  \boldsymbol L^2((0, T)\times \partial\Omega)\]
is continuous.

When $\boldsymbol f$ is regarded as a boundary source, the operator $\Lambda_{\rho,T}$ models boundary measurements for the elastic wave produced by the source on  $(0, T)\times\partial\Omega$.
The inverse problem for the elastic wave equation is described as follows:

{\bf(IP)} Reconstructing the density $\rho$  by giving the knowledge of $\Lambda_{\rho, T}$.

From the computational point of view, a more challenging issue is the lack of stability.
A small variation of the data may bring a huge error in the reconstruction.
Thus, the stability issue for the above {\bf(IP)} problem has gained extensive attention.
However, since the propagation mode of elastic wave includes the superposition of shear wave and compressional wave, the corresponding scattering problem becomes more complicated.


\subsection{Notations}
The Fourier transform of the function $\boldsymbol u(\boldsymbol x)$ is defined by
\begin{align*}
\mathcal F(\boldsymbol u)(\boldsymbol\xi)=\int_{\mathbb R^d} e^{{\rm i} \boldsymbol x\cdot\boldsymbol \xi}\boldsymbol u(\boldsymbol x) {\rm d}\boldsymbol x, ~\quad \boldsymbol x\in\mathbb R^d.
\end{align*}
Denote by $\boldsymbol L^2(\Omega): = L^2(\Omega)^d$ and $\boldsymbol H_0^1(\Omega): =H_0^1 (\Omega)^d  $ the Sobolev spaces equipped with the corresponding  norms
\begin{align*}
\|\boldsymbol u\|_{\boldsymbol L^2(\Omega)}&=\big(\int_\Omega |\boldsymbol u|^2{\rm d} \boldsymbol x\big)^{1/2}, &&\boldsymbol x\in \Omega,\\
\|\boldsymbol u\|^2_{\boldsymbol H_0^1(\Omega)}&=\|\nabla \boldsymbol u\|^2_{\boldsymbol L^2(\Omega)},&&\boldsymbol x\in \Omega.
\end{align*}
Define the inner products
\begin{align*}
(\boldsymbol u, \boldsymbol v)_{ \boldsymbol  L^2((0, T)\times \Omega)}&=\int_0^T\int_\Omega\boldsymbol u\cdot\boldsymbol v {\rm d}\boldsymbol x{\rm d}t,&& (t, \boldsymbol x)\in (0, T)\times\Omega,\\
\langle\boldsymbol f, \boldsymbol h\rangle_{ \boldsymbol L^2((0, T)\times\partial\Omega)}&=\int_0^T\int_{\partial\Omega}\boldsymbol f\cdot\boldsymbol h{\rm d}S{\rm d} t,&& (t, \boldsymbol x)\in (0, T)\times\partial\Omega,
\end{align*}
and
\begin{align*}
	(\boldsymbol u, \boldsymbol v)_{\boldsymbol L^2(\Omega;\rho{\rm d}\boldsymbol x)}&=\int_\Omega(\boldsymbol u\cdot\boldsymbol v) \rho {\rm d}\boldsymbol x,&& \boldsymbol x\in \Omega,\\
(\boldsymbol u, \boldsymbol v)_{\boldsymbol L^2(\Omega)}&=\int_\Omega\boldsymbol u\cdot\boldsymbol v {\rm d}\boldsymbol x,&& \boldsymbol x\in \Omega,\\
\langle\boldsymbol f, \boldsymbol h\rangle_{\boldsymbol L^2(\partial\Omega)}&=\int_{\partial\Omega}\boldsymbol f\cdot\boldsymbol h {\rm d}S,&& \boldsymbol x\in \partial\Omega.
\end{align*}

Let the operator $\Theta$ be the extension of time by zero from $(0, T)$ to $(0, 2T)$ given by
\begin{align}\label{theta}
\hat{\boldsymbol f}(t, \cdot): =\Theta \boldsymbol f(t, \cdot), ~\quad \boldsymbol f\in \boldsymbol L^2((0, T)\times\partial\Omega), ~t\in (0, T).
\end{align}
The integral  operators $\mathscr B$ and  $\mathscr I$ are defined by
\begin{align}
	\mathscr B\boldsymbol f(t, \cdot):&=\frac{1}{2}\int^{2T-t}_t\boldsymbol f(s, \cdot){\rm d}s,&& \boldsymbol f\in  \boldsymbol L^2((0, 2T)\times\partial\Omega), ~t\in (0, T),\label{B}\\
	\mathscr I\boldsymbol f(t, \cdot):&=\int^T_t\boldsymbol f(s, \cdot){\rm d}s,&&\boldsymbol f\in \boldsymbol L^2((0, T)\times\partial\Omega),~t\in (0, T).\nonumber
\end{align}


\section{Reconstruct a formula for the density}\label{3}

The goal of this section is to apply a modified boundary control method and the CGO solutions approach to obtain the  reconstruction formula associated with the  density.

\subsection{An identity for the wave and the source}

This subsection is devoted to establishing the Blagove\v{s}\v{c}enski\u{\i} type identity of the vector case, which are the foundation of the boundary control method introduced by \cite{Bla1966}, while the  scalar form is given in \cite{Bingham2008}.

Let $\boldsymbol u_{f}$, $\boldsymbol u_{h}$ stand for the solution of \eqref{ab} with respect to boundary $\boldsymbol f$ and $\boldsymbol h$, respectively. The following lemma corresponds to the Blagove\v{s}\v{c}enski\u{\i} type identity.
\begin{lemm}\label{blag}
Suppose that $\boldsymbol f, \boldsymbol h\in  \boldsymbol {\mathcal C}^\infty_0((0, T)\times\partial\Omega)$. One has
\begin{align}
 (\boldsymbol u_{f}(T),\boldsymbol u_{h}(T))_{\boldsymbol L^2(\Omega;\rho{\rm d}\boldsymbol x)}=\langle\boldsymbol f,\mathcal J(\Lambda_{\rho,2T})\boldsymbol h\rangle_{\boldsymbol L^2((0, T)\times\partial\Omega)},
\label{ag}
\end{align}
 where $\mathcal J(\Lambda_{\rho, 2T}):=\Lambda_{\rho, T}^{\ast} \mathscr B\Theta- \mathscr B\Lambda_{\rho, 2T}\Theta$ and  $\Lambda_{\rho, T}^{\ast}$ is the adjoint operator of $\Lambda_{\rho, T}$.
\end{lemm}
\begin{proof}
Extending $\boldsymbol f$ and $\boldsymbol h$ from $(0, T)\times\partial\Omega$ to $(0, 2T)\times \partial\Omega$, denote by $\hat{\boldsymbol f},  \hat{\boldsymbol h}$.
Let
\begin{align*}
 	w(t, s) = \int_\Omega(\boldsymbol  u_{\hat f} (t, \cdot)\cdot\boldsymbol u_{\hat h}(s, \cdot))\rho(\cdot){\rm d}\boldsymbol x.
\end{align*}
One has
\begin{align}\label{differeq}
&(\partial^2_t-\partial^2_s)w(t, s)\nonumber\\
=&\int_\Omega\big(\nabla\cdot(\mu(\nabla\boldsymbol u_{\hat f}(t, \cdot)+\nabla\boldsymbol u_{\hat f}^\top(t, \cdot)))+\nabla(\lambda\nabla\cdot \boldsymbol u_{\hat f}(t, \cdot))\big)\cdot\boldsymbol u_{\hat h} (s, \cdot){\rm d}\boldsymbol x\nonumber\\
&-\int_\Omega\boldsymbol u_{\hat f}(t,\cdot)\cdot\big(\nabla\cdot(\mu(\nabla\boldsymbol u_{\hat h}(s, \cdot)+\nabla\boldsymbol u_{\hat h}^\top(s, \cdot)))+\nabla(\lambda\nabla\cdot \boldsymbol u_{\hat h}(s, \cdot))\big){\rm d}\boldsymbol x  \nonumber\\
 =&\int_{\partial\Omega}\big(\mu\partial_{\boldsymbol \nu}\boldsymbol u_{\hat f}(t, \cdot)+\mu\nabla\boldsymbol u_{\hat f}^\top(t, \cdot)\cdot\boldsymbol \nu
 +\lambda(\nabla\cdot\boldsymbol u_{\hat f} (t, \cdot))\boldsymbol \nu\big)\cdot\hat{\boldsymbol h}(s, \cdot){\rm d}S\nonumber\\
  &-\int_\Omega\mu(\nabla \boldsymbol u_{\hat f}(t, \cdot)+\nabla \boldsymbol u_{\hat f}^\top(t, \cdot)):\nabla \boldsymbol u_{\hat h}(s, \cdot)+\lambda(\nabla \cdot\boldsymbol u_{\hat f}(t, \cdot))(\nabla \cdot\boldsymbol u_{\hat h}(s, \cdot)) {\rm d}\boldsymbol x\nonumber\\
&-\int_{\partial\Omega}\hat{\boldsymbol f}(t, \cdot)\cdot\big(\mu\partial_{\boldsymbol \nu}\boldsymbol u_{\hat h}(s, \cdot)+\mu\nabla\boldsymbol u_{\hat h}^\top(s, \cdot)\cdot\boldsymbol \nu+\lambda(\nabla\cdot\boldsymbol u_{\hat h}(s, \cdot))\boldsymbol \nu\big){\rm d}S\nonumber\\
 &+\int_\Omega\nabla \boldsymbol u_{\hat f}(t, \cdot):\mu(\nabla \boldsymbol u_{\hat h}(s, \cdot)+\nabla \boldsymbol u_{\hat h}^\top(s, \cdot))+(\nabla \cdot\boldsymbol u_{\hat f}(t, \cdot))(\lambda\nabla \cdot\boldsymbol u_{\hat h}(s, \cdot)){\rm d}\boldsymbol x\nonumber\\
 =&\langle\Lambda_{\rho, 2T}[\hat{\boldsymbol f}(t, \cdot)], \hat{\boldsymbol h}(s, \cdot)\rangle_{\boldsymbol L^2(\partial\Omega)}-\langle\hat{\boldsymbol f}(t, \cdot),\Lambda_{\rho, 2T}[\hat{\boldsymbol h}(s, \cdot)]\rangle_{\boldsymbol L^2(\partial\Omega)}.
\end{align}
Here $\boldsymbol A: \boldsymbol B =\text{tr}(\boldsymbol A\boldsymbol B^\top)$ is the Frobenius inner product of square matrices $\boldsymbol A$ and $\boldsymbol B$.

We regard \eqref{differeq} as a inhomogeneous  partial  differential equation
\begin{align*}
	\partial^2_tw(t, s)-\partial^2_sw(t, s)=\langle\Lambda_{\rho, 2T}[\hat{\boldsymbol f}(t, \cdot)],  \hat{\boldsymbol h}(s, \cdot)\rangle_{\boldsymbol L^2(\partial\Omega)}-\langle\hat{\boldsymbol f}(t, \cdot), \Lambda_{\rho, 2T}[\hat{\boldsymbol h}(s, \cdot)]\rangle_{\boldsymbol L^2(\partial\Omega)}
\end{align*}
with  vanishing initial conditions $w(t, s)|_{t=0}=\partial_t w(t, s)|_{t=0}=0.$
 It follows from \cite[Theorem 3.4.1]{Struss1992} that
\begin{align*}
w(t, s)=&\frac{1}{2}(w(t, s+t)-w(t, s-t))|_{t=0}+\frac{1}{2}\int^{s+t}_{s-t}\partial_{t'} w(t', s')|_{t'=0}{\rm d}s'\\
&+\frac{1}{2}\int_{0}^t\int_{s-(t-t')}^{s+(t-t')}\langle\Lambda_{\rho, 2T}[\hat{\boldsymbol f}(t', \cdot)],  \hat{\boldsymbol h}(s', \cdot)\rangle_{\boldsymbol L^2(\partial\Omega)}\\
&-\langle\hat{\boldsymbol f}(t', \cdot), \Lambda_{\rho, 2T}[\hat{\boldsymbol h}(s', \cdot)]\rangle_{\boldsymbol L^2(\partial\Omega)} {\rm d}s'{\rm d}t'\\
=&\frac{1}{2}\int_{0}^t\int_{s-(t-t')}^{s+(t-t')}\langle\Lambda_{\rho, 2T}[\hat{\boldsymbol f}(t', \cdot)],  \hat{\boldsymbol h}(s', \cdot)\rangle_{\boldsymbol L^2(\partial\Omega)}\\
&-\langle\hat{\boldsymbol f}(t', \cdot), \Lambda_{\rho, 2T}[\hat{\boldsymbol h}(s', \cdot)]\rangle_{\boldsymbol L^2(\partial\Omega)} {\rm d}s'{\rm d}t'.
\end{align*}
If we take  $t=s=T$ and following  the definitions of operator $\Theta$ and $\mathscr B$ in \eqref{theta}--\eqref{B}, then
\begin{align*}
&w(T,T)\\
=&\frac{1}{2}\int^{T}_0\int^{2T-t}_{t}\langle\Lambda_{\rho, 2T}[\hat{\boldsymbol f}(t, \cdot)],  \hat{\boldsymbol h}(s, \cdot)\rangle_{\boldsymbol L^2(\partial\Omega)}-\langle\hat{\boldsymbol f}(t, \cdot), \Lambda_{\rho, 2T}[\hat{\boldsymbol h}(s, \cdot)]\rangle_{\boldsymbol L^2(\partial\Omega)}
{\rm d}s{\rm d}t\\
=&\frac{1}{2}\int^{T}_0\int^{2T-t}_{t}\langle\Lambda_{\rho, 2T}\Theta{\boldsymbol f}(t, \cdot),  \Theta{\boldsymbol h}(s, \cdot)\rangle_{\boldsymbol L^2(\partial\Omega)}-\langle\Theta{\boldsymbol f}(t, \cdot), \Lambda_{\rho, 2T}\Theta{\boldsymbol h}(s, \cdot)\rangle_{\boldsymbol L^2(\partial\Omega)}
{\rm d}s{\rm d}t\\
=&\int^{T}_0\langle\Lambda_{\rho, 2T}\Theta{\boldsymbol f}(t, \cdot),  \mathscr B\Theta{\boldsymbol h}(t, \cdot)\rangle_{\boldsymbol L^2(\partial\Omega)}-\langle\boldsymbol f(t, \cdot), \mathscr B\Lambda_{\rho, 2T}\Theta{\boldsymbol h}(t, \cdot)\rangle_{\boldsymbol L^2(\partial\Omega)}{\rm d}t\\
=&\int^{T}_0\langle\Lambda_{\rho, T}[{\boldsymbol f}(t, \cdot)],  \mathscr B\Theta{\boldsymbol h}(t, \cdot)\rangle_{\boldsymbol L^2(\partial\Omega)}-\langle\boldsymbol f(t, \cdot), \mathscr B\Lambda_{\rho, 2T}\Theta{\boldsymbol h}(t, \cdot)\rangle_{\boldsymbol L^2(\partial\Omega)}{\rm d}t\\
=&\int^{T}_0\langle\boldsymbol f(t, \cdot),  \Lambda_{\rho, T}^*\mathscr B\Theta{\boldsymbol h}(t, \cdot)\rangle_{\boldsymbol L^2(\partial\Omega)}-\langle\boldsymbol f(t, \cdot), \mathscr B\Lambda_{\rho, 2T}\Theta{\boldsymbol h}(t, \cdot)\rangle_{\boldsymbol L^2(\partial\Omega)}{\rm d}t\\
=&\int^{T}_0\langle\boldsymbol f(t, \cdot),  \Lambda_{\rho, T}^*\mathscr B\Theta{\boldsymbol h}(t, \cdot)-\mathscr B\Lambda_{\rho, 2T}\Theta{\boldsymbol h}(t, \cdot)\rangle_{\boldsymbol L^2(\partial\Omega)}{\rm d}t.
\end{align*}
Therefore,
\begin{align*}
(\boldsymbol u_{f}(T),&\boldsymbol u_h(T))_{\boldsymbol L^2(\Omega; \rho{\rm d}\boldsymbol x)}
	=\int^{T}_0\langle\boldsymbol f(t, \cdot),  \Lambda_{\rho, T}^*\mathscr B\Theta{\boldsymbol h}(t, \cdot)-\mathscr B\Lambda_{\rho, 2T}\Theta{\boldsymbol h}(t, \cdot)\rangle_{\boldsymbol L^2(\partial\Omega)}{\rm d}t.
\end{align*}
The proof is complete.
\end{proof}

Then we give a modified Blagove\v{s}\v{c}enski\u{\i} type identity.
\begin{theo}
Suppose that $\boldsymbol f\in \boldsymbol {\mathcal{C}}^\infty_0((0, T)\times\partial\Omega)$, and $\boldsymbol \phi\in \boldsymbol { \mathcal{C}}^\infty(\Omega)$ with
$\boldsymbol \phi(\boldsymbol x):=\boldsymbol\iota e^{\rm i\boldsymbol\theta\cdot \boldsymbol x}$, where $\boldsymbol { \mathcal{C}}^\infty(\Omega):={\mathcal{C}}^{\infty} (\Omega)^d,\boldsymbol \iota\in \mathbb C^d$ and
\begin{align*}
\boldsymbol \theta= (\boldsymbol\xi+\rm i\boldsymbol\eta)/2,~\quad  \boldsymbol \xi,~\boldsymbol\eta\in \mathbb{R}^d, \quad|\boldsymbol \xi|=|\boldsymbol \eta|.
\end{align*}
Furthermore, we assume that $\boldsymbol\iota$ and $\boldsymbol\theta$ satisfy
\begin{align}\label{cgo}
	{\rm i}(\boldsymbol\theta \cdot\boldsymbol \iota^\top+\boldsymbol \iota\cdot\boldsymbol \theta^\top)\nabla \mu+{\rm i}(\boldsymbol \iota\cdot\boldsymbol\theta)\nabla \lambda-\mu(\boldsymbol \theta\cdot\boldsymbol\theta\boldsymbol)\boldsymbol\iota-(\lambda+\mu)(\boldsymbol \iota\cdot\boldsymbol\theta)\boldsymbol\theta=\boldsymbol 0
\end{align}
with $|\boldsymbol \iota|\neq 0$.
 Then
\begin{align}
 (\boldsymbol u_{f}(T), \boldsymbol \phi)_{\boldsymbol L^2(\Omega;  \rho{\rm d}\boldsymbol{x})}=\langle\boldsymbol f, \mathcal K(\Lambda_{\rho, T})\boldsymbol \phi\rangle_{\boldsymbol L^2((0, T)\times\partial\Omega)},
\label{ah}
\end{align}
where
\begin{align}\label{E:K}
\mathcal K(\Lambda_{\rho, T}):= \Lambda_{\rho, T}^*\mathscr IT_0- \mathscr IT_1,
\end{align}
and $T_i,i=0, 1$ are the first two traces on $\partial\Omega$
defined by
\begin{align*}
 T_0\boldsymbol \phi=\boldsymbol \phi|_{\partial\Omega},
 ~\quad T_1\boldsymbol \phi=(\mu\partial_{\boldsymbol \nu}\boldsymbol \phi+\mu\nabla\boldsymbol \phi^\top\cdot\boldsymbol \nu+\lambda(\nabla\cdot\boldsymbol \phi)\boldsymbol \nu)|_{\partial\Omega}.
\end{align*}
\end{theo}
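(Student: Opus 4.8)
The plan is to follow the scheme of Lemma~\ref{blag}, but to take advantage of the fact that the fixed field $\boldsymbol\phi$ carries no time dependence, so that the two-variable d'Alembert representation used there collapses to a one-variable ordinary differential equation. The essential preliminary observation, and the heart of the matter, is that the constraint \eqref{cgo} says precisely that $\boldsymbol\phi$ lies in the kernel of the spatial elasticity operator. Writing $\mathcal A\boldsymbol u:=\nabla\cdot(\mu(\nabla\boldsymbol u+\nabla\boldsymbol u^\top))+\nabla(\lambda\nabla\cdot\boldsymbol u)=\rho\,\mathcal L_\rho\boldsymbol u$ and substituting $\boldsymbol\phi=\boldsymbol\iota e^{{\rm i}\boldsymbol\theta\cdot\boldsymbol x}$, one computes $\nabla\boldsymbol\phi={\rm i}(\boldsymbol\iota\cdot\boldsymbol\theta^\top)e^{{\rm i}\boldsymbol\theta\cdot\boldsymbol x}$, $\nabla\boldsymbol\phi+\nabla\boldsymbol\phi^\top={\rm i}(\boldsymbol\iota\cdot\boldsymbol\theta^\top+\boldsymbol\theta\cdot\boldsymbol\iota^\top)e^{{\rm i}\boldsymbol\theta\cdot\boldsymbol x}$ and $\nabla\cdot\boldsymbol\phi={\rm i}(\boldsymbol\iota\cdot\boldsymbol\theta)e^{{\rm i}\boldsymbol\theta\cdot\boldsymbol x}$. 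Carrying the two divergences through the product rule and separating the terms in which a derivative lands on $\mu,\lambda$ from those in which it lands on the exponential, one finds $\mathcal A\boldsymbol\phi=e^{{\rm i}\boldsymbol\theta\cdot\boldsymbol x}\times(\text{the left-hand side of }\eqref{cgo})$. Thus \eqref{cgo} is exactly the statement $\mathcal A\boldsymbol\phi=\boldsymbol 0$, equivalently $\mathcal L_\rho\boldsymbol\phi=\boldsymbol 0$.

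Next I would introduce the scalar function $W(t):=(\boldsymbol u_f(t,\cdot),\boldsymbol\phi)_{\boldsymbol L^2(\Omega;\rho{\rm d}\boldsymbol x)}$ and differentiate it twice in $t$. Using $\partial_t^2\boldsymbol u_f=\mathcal L_\rho\boldsymbol u_f$ from \eqref{ab}, and then the very same second Green identity for $\mathcal A$ already derived in the middle chain of \eqref{differeq} (whose volume bilinear terms cancel by the symmetry of the Frobenius pairing, leaving only boundary contributions), one obtains
\begin{align*}
W''(t)&=\int_\Omega(\mathcal A\boldsymbol u_f(t,\cdot))\cdot\boldsymbol\phi\,{\rm d}\boldsymbol x\\
&=\int_\Omega\boldsymbol u_f(t,\cdot)\cdot(\mathcal A\boldsymbol\phi)\,{\rm d}\boldsymbol x+\langle T_{\boldsymbol\nu}\boldsymbol u_f(t,\cdot),\boldsymbol\phi\rangle_{\boldsymbol L^2(\partial\Omega)}-\langle\boldsymbol u_f(t,\cdot),T_{\boldsymbol\nu}\boldsymbol\phi\rangle_{\boldsymbol L^2(\partial\Omega)}.
\end{align*}
The volume term vanishes by the first paragraph, while on the boundary $\boldsymbol u_f(t,\cdot)=\boldsymbol f(t,\cdot)$, $T_{\boldsymbol\nu}\boldsymbol u_f=\Lambda_{\rho,T}[\boldsymbol f]$, $\boldsymbol\phi|_{\partial\Omega}=T_0\boldsymbol\phi$ and $T_{\boldsymbol\nu}\boldsymbol\phi=T_1\boldsymbol\phi$, so that
\begin{align*}
W''(t)=\langle\Lambda_{\rho,T}[\boldsymbol f(t,\cdot)],T_0\boldsymbol\phi\rangle_{\boldsymbol L^2(\partial\Omega)}-\langle\boldsymbol f(t,\cdot),T_1\boldsymbol\phi\rangle_{\boldsymbol L^2(\partial\Omega)}=:g(t).
\end{align*}

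Finally, the vanishing Cauchy data $\boldsymbol u_f(0,\cdot)=\partial_t\boldsymbol u_f(0,\cdot)=\boldsymbol 0$ give $W(0)=W'(0)=0$, so integrating $W''=g$ twice (equivalently, a single integration by parts) yields $W(T)=\int_0^T(T-t)g(t)\,{\rm d}t$. Since $T_0\boldsymbol\phi$ and $T_1\boldsymbol\phi$ do not depend on time, the weight $T-t$ is exactly $\mathscr I$ acting on them, namely $(\mathscr IT_i\boldsymbol\phi)(t)=(T-t)T_i\boldsymbol\phi$; transferring $\Lambda_{\rho,T}$ onto $\boldsymbol f$ through its adjoint $\Lambda_{\rho,T}^*$ then recasts $W(T)$ as $\langle\boldsymbol f,\Lambda_{\rho,T}^*\mathscr IT_0\boldsymbol\phi-\mathscr IT_1\boldsymbol\phi\rangle_{\boldsymbol L^2((0,T)\times\partial\Omega)}$, which is \eqref{ah} by the definition \eqref{E:K} of $\mathcal K(\Lambda_{\rho,T})$. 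The main obstacle is the tensor bookkeeping of the first paragraph that certifies the equivalence of \eqref{cgo} with $\mathcal A\boldsymbol\phi=\boldsymbol 0$; once this is in hand the rest is the template of Lemma~\ref{blag}. The regularity required to justify the two differentiations of $W$ and the Green identity holds for smooth $\boldsymbol f$, and the identity then extends to all admissible $\boldsymbol f$ by the continuity of $\Lambda_{\rho,T}$ and a density argument.
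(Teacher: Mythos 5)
Your proposal is correct and follows essentially the same route as the paper's proof: both differentiate $(\boldsymbol u_f(t,\cdot),\boldsymbol\phi)_{\boldsymbol L^2(\Omega;\rho\,{\rm d}\boldsymbol x)}$ twice in time, apply the second Green identity for the elasticity operator together with the fact that \eqref{cgo} makes $\boldsymbol\phi$ a solution of the elastostatic system, integrate twice in time using the vanishing Cauchy data, and then dualize onto $\boldsymbol f$. The only differences are cosmetic: you verify explicitly that \eqref{cgo} is equivalent to $\nabla\cdot(\mu(\nabla\boldsymbol\phi+\nabla\boldsymbol\phi^\top))+\nabla(\lambda\nabla\cdot\boldsymbol\phi)=\boldsymbol 0$ (the paper relegates this to a remark), and you produce the kernel $(T-t)$ directly by switching the order of integration rather than passing through the operator $\mathcal I$ and its adjoint $\mathcal I^{*}=\mathscr I$ as the paper does.
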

\begin{proof}
By proceeding an analogous analysis as Lemma \ref{blag}, we immediately see that
\begin{align*}
 \partial^2_t(\boldsymbol u_{f}, \boldsymbol \phi)_{\boldsymbol L^2(\Omega; \rho{\rm d}\boldsymbol x)}=&(\nabla\cdot(\mu(\nabla\boldsymbol u_f+\nabla\boldsymbol u_f^\top))+\nabla(\lambda\nabla\cdot \boldsymbol u_{f}),  \boldsymbol \phi)_{\boldsymbol L^2(\Omega)}\\
 &-(\boldsymbol u_f, \nabla\cdot(\mu(\nabla \boldsymbol \phi+\nabla\boldsymbol \phi^\top)) +\nabla(\lambda\nabla\cdot \boldsymbol \phi))_{\boldsymbol L^2(\Omega)}\\
 =&\langle\mu\partial_{\boldsymbol \nu}\boldsymbol u_f+\mu\nabla\boldsymbol u_f^\top\cdot\boldsymbol \nu+\lambda(\nabla\cdot\boldsymbol u_f)\boldsymbol \nu,  \boldsymbol \phi \rangle_{\boldsymbol L^2(\partial\Omega)}\\
 &-\langle\boldsymbol f, \mu\partial_{\boldsymbol \nu}\boldsymbol \phi+\mu\nabla \boldsymbol \phi^\top\cdot\boldsymbol \nu+\lambda(\nabla\cdot\boldsymbol \phi)\boldsymbol \nu\rangle_{\boldsymbol L^2(\partial\Omega)}\\
 =&\langle\Lambda_{\rho, T}[\boldsymbol f],  \boldsymbol \phi\rangle_{\boldsymbol L^2(\partial\Omega)}-\langle\boldsymbol f,  \mu\partial_{\boldsymbol \nu}\boldsymbol \phi+\mu\nabla \boldsymbol \phi^\top\cdot\boldsymbol \nu+\lambda(\nabla\cdot\boldsymbol \phi)\boldsymbol \nu\rangle_{\boldsymbol L^2(\partial\Omega)}.
\end{align*}
Using  initial conditions $\boldsymbol u(t, \cdot)|_{t=0}=\partial_t \boldsymbol u(t, \cdot)|_{t=0}=\boldsymbol  0$,
we can derive
\begin{align*}
 &(\boldsymbol u_{f}(T),
 \boldsymbol \phi)_{\boldsymbol  L^2(\Omega;  \rho{\rm d}\boldsymbol x)}\\
=&\int^T_0\int^s_0\langle\Lambda_{\rho, T}[\boldsymbol f],  \boldsymbol \phi\rangle_{\boldsymbol L^2(\partial\Omega)}-\langle\boldsymbol f,  \mu\partial_{\boldsymbol \nu}\boldsymbol \phi+\mu\nabla \boldsymbol \phi^\top\cdot\boldsymbol \nu+\lambda(\nabla\cdot\boldsymbol \phi)\boldsymbol \nu\rangle_{\boldsymbol L^2(\partial\Omega)} {\rm d}t  {\rm d}s\\
=&\langle\mathcal{I}\Lambda_{\rho, T}[\boldsymbol f],  \boldsymbol \phi\rangle_{\boldsymbol L^2((0, T)\times\partial\Omega)}-\langle\mathcal{I}\boldsymbol f,  \mu\partial_{\boldsymbol \nu}\boldsymbol \phi+\mu\nabla \boldsymbol \phi^\top\cdot\boldsymbol \nu+\lambda(\nabla\cdot\boldsymbol \phi)\boldsymbol \nu\rangle_{\boldsymbol L^2((0, T)\times\partial\Omega)}\\
=&\langle\boldsymbol f,  \Lambda^*_{\rho, T} \mathcal{I}^*\boldsymbol \phi\rangle_{\boldsymbol L^2((0, T)\times\partial\Omega)}-\langle\boldsymbol f, \mathcal{I}^*(\mu\partial_{\boldsymbol \nu}\boldsymbol \phi+\mu\nabla \boldsymbol \phi^\top\cdot\boldsymbol \nu+\lambda(\nabla\cdot\boldsymbol \phi)\boldsymbol \nu)\rangle_{\boldsymbol L^2((0, T)\times\partial\Omega)}\\
=&\langle\boldsymbol f, \Lambda^*_{\rho, T}\mathcal{I}^*\boldsymbol \phi-\mathcal{I}^*(\mu\partial_{\boldsymbol \nu}\boldsymbol \phi+\mu\nabla \boldsymbol \phi^\top\cdot\boldsymbol \nu+\lambda(\nabla\cdot\boldsymbol \phi)\boldsymbol \nu)\rangle_{\boldsymbol L^2((0, T)\times\partial\Omega)},
\end{align*}
where $\mathcal{I}\boldsymbol f(s, \boldsymbol x):=\int^s_0\boldsymbol f(t, \boldsymbol x) {\rm d}t$.
It is easy to verify that $\mathcal{I}^*=\mathscr I$.

The proof is complete.
\end{proof}
\begin{rema}
The function $\boldsymbol \phi$ is a CGO  solution for the  elastostatic system
    \begin{align*}
    	\nabla\cdot(\mu(\nabla \boldsymbol \phi+\nabla\boldsymbol \phi^\top)) +\nabla(\lambda\nabla\cdot \boldsymbol \phi)= \boldsymbol 0.
    \end{align*}
For instance, let $\boldsymbol\xi$ and $\boldsymbol \eta$ satisfy $\boldsymbol \xi\bot\boldsymbol \eta.$
If $\boldsymbol \iota\cdot\boldsymbol \theta=0$ and $\mu$ is a constant, then the condition \eqref{cgo} holds.
\end{rema}


\subsection{Properties of the connect operator}
Denote by $\mathscr D:  \boldsymbol L^2((0, T)\times \partial\Omega)\rightarrow  \boldsymbol L^2((0, T)\times \Omega)$ the Dirichlet map corresponding to \eqref{ab} as follows:
\begin{align} \label{Dirich}
\mathscr D \boldsymbol g=\boldsymbol {z},
\end{align}
where $ \boldsymbol {z}$ satisfies the boundary value problem
\begin{align*}
\begin{cases}
\mathcal L_\rho\boldsymbol {z}=\boldsymbol 0~\quad &\text{in}~(0, T)\times\Omega,\\
\boldsymbol {z}=\boldsymbol {g}~\quad &\text{on}~(0, T)\times\partial\Omega.
\end{cases}
\end{align*}
This is a transient deterministic solution problem, although formally dependent on time, which is solved by regarding time as a parameter.
It is well known from the standard elliptic theory that
$\mathscr D$ is a continuous operator from $\boldsymbol L^2((0, T)\times\partial\Omega)$ to $ \boldsymbol L^2((0, T)\times\Omega)$.

The following Lemmas \ref{bouncotro}--\ref{anW} address some facts to be used in the sequel, proofs of which can be found in \cite{Beli2002}.

\begin{lemm}\label{bouncotro}
The solution $\boldsymbol u(t, \boldsymbol x)$ to the problem \eqref{ab} is written as the following
formula:
\begin{align*}
	\boldsymbol u(t, \boldsymbol x)=-\mathcal L_\rho\int_0^t S(t-\tau)\mathscr D\boldsymbol f(\tau, \boldsymbol x) {\rm d}\tau~\quad \text{for} ~\boldsymbol f\in \boldsymbol { \mathcal C}_{0}^\infty((0, T)\times\partial\Omega),
\end{align*}
where $\mathscr D$ is the Dirichlet operator as seen in \eqref{Dirich} and  $S(t)$ is a family of sines generated by the operator $\mathcal L_\rho$.
\end{lemm}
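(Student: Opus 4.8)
The plan is to turn the inhomogeneous-boundary problem \eqref{ab} into an initial value problem with homogeneous Dirichlet data and a volume source, and then to combine Duhamel's principle with two integrations by parts in time.

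First I would fix the functional-analytic setting: integrating by parts against the Dirichlet condition shows that $(\mathcal L_\rho \boldsymbol u,\boldsymbol v)_{\boldsymbol L^2(\Omega;\rho{\rm d}\boldsymbol x)}$ is symmetric and non-positive, so $-\mathcal L_\rho$ is a positive self-adjoint operator on $\boldsymbol L^2(\Omega;\rho{\rm d}\boldsymbol x)$ with form domain $\boldsymbol H_0^1(\Omega)$; this is precisely the reason the weight $\rho{\rm d}\boldsymbol x$ is carried throughout. The sine family $S(t)$ it generates is characterised by $S(0)=0$, $S'(0)=I$ and $S''(t)=\mathcal L_\rho S(t)$, with $S'(t)$ the associated cosine family.

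Next, set $\boldsymbol w(t,\cdot):=\mathscr D\boldsymbol f(t,\cdot)$, the $\mathcal L_\rho$-harmonic lifting of the boundary data from \eqref{Dirich}, and let $\boldsymbol v:=\boldsymbol u_f-\boldsymbol w$. Because $\mathscr D$ treats $t$ as a parameter it commutes with $\partial_t$; hence $\boldsymbol v$ vanishes on $(0,T)\times\partial\Omega$, has vanishing initial position (the compatibility $\boldsymbol f(0,\cdot)=\boldsymbol 0$ gives $\boldsymbol w(0,\cdot)=\boldsymbol 0$), and solves $\partial_t^2\boldsymbol v-\mathcal L_\rho\boldsymbol v=-\partial_t^2\boldsymbol w$ in $(0,T)\times\Omega$, since $\mathcal L_\rho\boldsymbol w=\boldsymbol 0$. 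Variation of parameters then yields, using $\boldsymbol v(0,\cdot)=\boldsymbol 0$,
\[
\boldsymbol v(t)=S(t)\partial_t\boldsymbol v(0)-\int_0^t S(t-\tau)\partial_\tau^2\boldsymbol w(\tau)\,{\rm d}\tau,\qquad \partial_t\boldsymbol v(0)=-\partial_t\boldsymbol w(0).
\]
I would then integrate the last integral by parts twice in $\tau$, using $S(0)=0$, $S'(0)=I$ and $S''=\mathcal L_\rho S$, which produces
\[
\int_0^t S(t-\tau)\partial_\tau^2\boldsymbol w(\tau)\,{\rm d}\tau=\boldsymbol w(t)-S(t)\partial_t\boldsymbol w(0)+\mathcal L_\rho\int_0^t S(t-\tau)\boldsymbol w(\tau)\,{\rm d}\tau,
\]
where the boundary contribution at $\tau=0$ carrying $\boldsymbol w(0)$ drops out. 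Substituting back, the two occurrences of $S(t)\partial_t\boldsymbol w(0)$ cancel and the passage $\boldsymbol u_f=\boldsymbol v+\boldsymbol w$ eliminates the stray $\boldsymbol w(t)$, leaving exactly $\boldsymbol u_f(t)=-\mathcal L_\rho\int_0^t S(t-\tau)\mathscr D\boldsymbol f(\tau)\,{\rm d}\tau$.

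The step I expect to be the main obstacle is the rigorous justification of the operator calculus rather than the algebra: the lifting $\boldsymbol w=\mathscr D\boldsymbol f$ lies only in the form domain and not in $D(\mathcal L_\rho)=\boldsymbol H^2(\Omega)^d\cap\boldsymbol H_0^1(\Omega)$ (it fails the homogeneous Dirichlet condition), so the interior identity $\mathcal L_\rho\boldsymbol w=\boldsymbol 0$, the two integrations by parts in time, and above all the right to pull the closed unbounded operator $\mathcal L_\rho$ outside the time integral must be validated by a regularity and density argument — one checks that $\int_0^t S(t-\tau)\mathscr D\boldsymbol f(\tau)\,{\rm d}\tau$ genuinely lands in $D(\mathcal L_\rho)$ (here the smoothing of the sine propagator and the assumed smoothness of $\boldsymbol f$ in $t$ enter) and approximates $\boldsymbol f$ by data vanishing to higher order near $t=0$ before passing to the limit. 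With these justifications, which follow the framework of \cite{Beli2002}, the representation is established.
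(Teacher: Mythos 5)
Your proposal is correct: the decomposition $\boldsymbol u_f=\boldsymbol v+\mathscr D\boldsymbol f$, Duhamel's formula for the homogeneous-Dirichlet problem, and the two integrations by parts in time (with the $S(t)\partial_t\boldsymbol w(0)$ terms cancelling, so that the non-vanishing of $\partial_t\boldsymbol f(0)$ is harmless) reproduce exactly the stated representation, and you rightly flag the only delicate point, namely justifying $\mathcal L_\rho\mathscr D\boldsymbol f=\boldsymbol 0$ and pulling the unbounded Dirichlet realization of $\mathcal L_\rho$ outside the time integral. The paper itself gives no proof of this lemma --- it defers to Belishev--Lasiecka \cite{Beli2002} --- and your argument is precisely the standard derivation found in that line of work (see also \cite{Lasi1986}), so it matches the intended proof rather than offering a genuinely different route.
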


\begin{lemm}\label{anW}
The operator $\mathcal W: \boldsymbol L^2((0, T)\times\partial\Omega)\rightarrow  \boldsymbol L^2(\Omega)$, given by
\begin{align*}
\mathcal W\boldsymbol f=-\int_0^T\mathcal L_\rho S(T-t)	\mathscr D\boldsymbol f(t, \boldsymbol x) {\rm d}t \quad \text{for}~ \boldsymbol f\in \boldsymbol L^2((0, T)\times\partial\Omega),
\end{align*}
is continuous.

Moreover,  its adjoint operator $\mathcal W^*: \boldsymbol L^2(\Omega)\rightarrow  \boldsymbol L^2((0, T)\times\partial\Omega)$, defined by
\begin{align*}
\mathcal W^*\boldsymbol{\mathfrak u}=-\mathscr D^*\mathcal L_\rho^*S^*(T-t)	\boldsymbol{\mathfrak u}~\quad \text{for}~\boldsymbol{\mathfrak u}\in \boldsymbol L^2(\Omega),
\end{align*}
is also continuous.
\end{lemm}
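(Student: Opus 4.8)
The plan is to recognize $\mathcal W$ as the map sending boundary data to the solution of \eqref{ab} evaluated at the terminal time, and then to establish continuity by a duality argument, computing the adjoint explicitly along the way. Indeed, setting $t=T$ in the representation formula of Lemma \ref{bouncotro} gives
\begin{align*}
\mathcal W\boldsymbol f=-\mathcal L_\rho\int_0^TS(T-\tau)\mathscr D\boldsymbol f(\tau,\cdot){\rm d}\tau=\boldsymbol u_f(T),
\end{align*}
so that the asserted continuity of $\mathcal W$ is exactly the statement that $\boldsymbol f\mapsto\boldsymbol u_f(T)$ is bounded from $\boldsymbol L^2((0,T)\times\partial\Omega)$ into $\boldsymbol L^2(\Omega)$. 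Since the factor $\mathcal L_\rho$ is an unbounded operator, one cannot estimate $\mathcal W$ factor by factor; the boundedness is a hidden-regularity phenomenon, which I would extract through the adjoint.

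First I would derive the formula for $\mathcal W^*$ formally. For $\boldsymbol{\mathfrak u}\in\boldsymbol L^2(\Omega)$, writing out $(\mathcal W\boldsymbol f,\boldsymbol{\mathfrak u})_{\boldsymbol L^2(\Omega)}$ and transposing each factor, using that the sine family $S(t)$ is generated by $\mathcal L_\rho$ and therefore commutes with it, yields
\begin{align*}
(\mathcal W\boldsymbol f,\boldsymbol{\mathfrak u})_{\boldsymbol L^2(\Omega)}=-\int_0^T\langle\boldsymbol f(t,\cdot),\mathscr D^*\mathcal L_\rho^*S^*(T-t)\boldsymbol{\mathfrak u}\rangle_{\boldsymbol L^2(\partial\Omega)}{\rm d}t,
\end{align*}
which is precisely the claimed expression $\mathcal W^*\boldsymbol{\mathfrak u}=-\mathscr D^*\mathcal L_\rho^*S^*(T-t)\boldsymbol{\mathfrak u}$. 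It then suffices to prove that $\mathcal W^*$ is bounded, since $\|\mathcal W\|=\|\mathcal W^*\|$. To bound $\mathcal W^*$, I would interpret $\mathscr D^*\mathcal L_\rho^*$ as a boundary-traction operator. Set $\boldsymbol w(t):=S^*(T-t)\boldsymbol{\mathfrak u}$; since $S(t)$ maps $\boldsymbol L^2(\Omega)$ into the domain $\boldsymbol H_0^1(\Omega)$ of $(-\mathcal L_\rho)^{1/2}$, the function $\boldsymbol w$ is a finite-energy solution of the homogeneous elastic wave equation $\partial_t^2\boldsymbol w-\mathcal L_\rho\boldsymbol w=\boldsymbol 0$ with vanishing Dirichlet trace on $\partial\Omega$, and $\|\boldsymbol w(t)\|_{\boldsymbol H^1(\Omega)}+\|\partial_t\boldsymbol w(t)\|_{\boldsymbol L^2(\Omega)}\leq C\|\boldsymbol{\mathfrak u}\|_{\boldsymbol L^2(\Omega)}$ uniformly in $t$. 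For a fixed $t$, Green's formula for the Lam\'e operator, applied with the elastostatic field $\mathscr D\boldsymbol g$ (which satisfies $\mathcal L_\rho\mathscr D\boldsymbol g=\boldsymbol 0$), converts the interior pairing $(\mathscr D\boldsymbol g,\mathcal L_\rho^*\boldsymbol w(t))_{\boldsymbol L^2(\Omega)}$ into a single boundary integral against the traction $T_{\boldsymbol\nu}\boldsymbol w(t)$, the term carrying the Dirichlet trace of $\boldsymbol w(t)$ dropping out. This identifies $\mathscr D^*\mathcal L_\rho^*\boldsymbol w(t)$, up to sign, with $T_{\boldsymbol\nu}\boldsymbol w(t)|_{\partial\Omega}$, so that $\mathcal W^*\boldsymbol{\mathfrak u}$ is, up to sign, the traction of $\boldsymbol w$.

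The main obstacle, and the crux of the argument, is the sharp trace (hidden) regularity for the Lam\'e system: that the traction of a finite-energy solution lies in $\boldsymbol L^2((0,T)\times\partial\Omega)$ with
\begin{align*}
\|T_{\boldsymbol\nu}\boldsymbol w\|_{\boldsymbol L^2((0,T)\times\partial\Omega)}\leq C\big(\|\boldsymbol w(0)\|_{\boldsymbol H^1(\Omega)}+\|\partial_t\boldsymbol w(0)\|_{\boldsymbol L^2(\Omega)}\big).
\end{align*}
This is the elastic analogue of the Lasiecka--Lions--Triggiani trace estimate, and I would establish it by the multiplier method, pairing the equation with a multiplier of the form $\boldsymbol h\cdot\nabla\boldsymbol w$ for a vector field $\boldsymbol h$ that equals $\boldsymbol\nu$ near $\partial\Omega$ and integrating by parts; the delicate point, absent in the scalar wave case, is controlling the coupling between the shear and compressional components of the Lam\'e system, which is exactly where the regularity result of \cite{Lasi1986} and the stated bounds on the Lam\'e parameters enter. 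Combining this estimate with the uniform energy bound on $\boldsymbol w$ and integrating in $t$ yields $\|\mathcal W^*\boldsymbol{\mathfrak u}\|_{\boldsymbol L^2((0,T)\times\partial\Omega)}\leq C\|\boldsymbol{\mathfrak u}\|_{\boldsymbol L^2(\Omega)}$, hence the continuity of both $\mathcal W^*$ and $\mathcal W$, in accordance with \cite{Beli2002}.
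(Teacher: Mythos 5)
Your proposal is sound, but it is worth knowing that the paper does not prove this lemma at all: it simply states that the proofs of Lemmas \ref{bouncotro}--\ref{anW} ``can be found in \cite{Beli2002}.'' What you have written is, in effect, a reconstruction of the argument underlying that citation, and its structure is the standard (and correct) one: identify $\mathcal W\boldsymbol f$ with $\boldsymbol u_f(T)$, pass to the adjoint, recognize $\mathscr D^*\mathcal L_\rho^*$ applied to the backward homogeneous solution as the boundary traction (your Green's-formula step, with the Dirichlet-trace term vanishing, is exactly right), and reduce everything to the sharp trace (hidden regularity) estimate for finite-energy solutions of the Lam\'e system. Notably, the paper itself carries out precisely this duality-plus-traction computation later, in Section \ref{5}, when it identifies $\mathcal W_{\rho,T}^*\boldsymbol\phi$ with $\mu\partial_{\boldsymbol\nu}\boldsymbol p+\mu\nabla\boldsymbol p^\top\cdot\boldsymbol\nu+\lambda(\nabla\cdot\boldsymbol p)\boldsymbol\nu$ for the dual system \eqref{dual sys}, so your route also anticipates material the authors do include. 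Two caveats: first, the crux of your argument --- the $\boldsymbol L^2((0,T)\times\partial\Omega)$ bound on the traction of a finite-energy solution --- is only sketched via the multiplier method; this is a genuinely nontrivial result (it is the main content of \cite{Lasi1986} and \cite{Beli2002} for the Lam\'e system), so in a self-contained write-up you would either prove it or cite it squarely, which puts your proof on the same logical footing as the paper's citation. Second, a small technical point: $\mathcal L_\rho$ is self-adjoint with respect to the weighted inner product $(\cdot,\cdot)_{\boldsymbol L^2(\Omega;\rho{\rm d}\boldsymbol x)}$ rather than the plain $\boldsymbol L^2(\Omega)$ pairing, so the transpositions in your duality computation (commuting $S$ with $\mathcal L_\rho$, moving stars around) should be done in the weighted space, or the $\rho$-factors tracked explicitly; this does not affect the conclusion but should be stated.
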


Let us define a control map $\mathcal W_{\rho, T}: \boldsymbol L^2((0, T)\times \partial\Omega)\rightarrow  \boldsymbol L^2(\Omega)$ as follows:
\begin{align}\label{ctm}
\mathcal W_{\rho, T}\boldsymbol f:=\boldsymbol u_{f}(T),
\end{align}
where  $\boldsymbol u_f$ is the solution of \eqref{ab}.
Thanks to Lemmas \ref{bouncotro}--\ref{anW}, it is easy to see that $\mathcal  W_{\rho, T}$ is continuous.

Besides, substituting \eqref{ctm} into \eqref{ag} yields that
\begin{align*}
\mathcal J(\Lambda_{\rho, 2T})=\mathcal W_{\rho, T}^{*}\mathcal W_{\rho, T},
\end{align*}
where the connect operator
\begin{align*}
\mathcal J(\Lambda_{\rho, 2T}): \boldsymbol L^2((0, T)\times \partial\Omega)\rightarrow \boldsymbol L^2((0, T)\times \partial\Omega)
\end{align*}
extends as a continuous operator on $ \boldsymbol L^2((0, T)\times\partial\Omega)$ because of the $\boldsymbol L^2$ continuity of $\mathcal W_{\rho, T}$.
Additionally,  formula \eqref{ah} shows that $\mathcal K(\Lambda_{\rho, T})$ as seen in \eqref{E:K} is equivalent to  the restriction of $\mathcal W^*_{\rho, T}$ on the CGO solution of the elastostatic system.

Let us describe the exact controllability of the elastic wave equation by the following observability inequality.
\begin{defi}\label{def:inequality}
Let $\boldsymbol u$ be a solution of
\begin{align}\label{dual}
\begin{cases}
\partial_t^2\boldsymbol u(t, \boldsymbol x)-\mathcal L_\rho\boldsymbol u(t, \boldsymbol x)= \boldsymbol 0 \quad &\text{in}~(0, T)\times\Omega,\\
\boldsymbol u(t, \boldsymbol x)= \boldsymbol 0 \quad &\text{on}~(0, T)\times\partial\Omega.
\end{cases}
\end{align}
If there exists a constant $C_{obs}>0$  such that for $\Gamma\subset\partial\Omega$ and $T>0$,
\begin{align}
\|\partial_t \boldsymbol u(T)\|_{\boldsymbol L^2(\Omega)}+\|\boldsymbol u(T)\|_{\boldsymbol H_0^1(\Omega)}\leq C_{obs}\|\mu\partial_{\boldsymbol \nu}\boldsymbol u+\mu\nabla\boldsymbol u^\top\cdot\boldsymbol \nu+\lambda(\nabla\cdot\boldsymbol u)\boldsymbol \nu\|_{\boldsymbol L^2((0, T)\times\Gamma)},\label{obs}
\end{align}
then the elastic wave equation in \eqref{dual} is
 continuously observable.
\end{defi}
\begin{rema}
The precise definition of $\Gamma$ in Definition \ref{def:inequality} is as seen in \eqref{gamma}.
\end{rema}

Setting $\boldsymbol \varphi\in \boldsymbol  L^2(\Omega)$,  we  consider the following control equation
\begin{align}
 \mathcal W_{\rho, T} \boldsymbol f=\boldsymbol \varphi \quad \text{for}~\boldsymbol f\in  \boldsymbol L^2((0, T)\times\partial\Omega).\label{ai}
\end{align}
It is obvious that $\mathcal W_{\rho, T}$ is not injective. Then  we aim at solving $\boldsymbol f=\mathcal W_{\rho, T}^\dag\boldsymbol \varphi$  to replace \eqref{ai}, where $\mathcal W_{\rho, T}^\dag$ is the pseudo-inverse operator of $\mathcal W_{\rho, T}$.
If $\mathcal W_{\rho, T}$ is surjective, then the operator $\mathcal W_{\rho, T}^\dag$ is a bounded operator.

%

Next we introduce the following fact.
\begin{lemm}\label{le:surjective}
If the continuous observability holds with $\Gamma=\partial\Omega$, the map
\begin{align*}
	\mathcal N:\boldsymbol L^2((0, T)\times\partial\Omega)&\rightarrow \boldsymbol H^{-1}(\Omega)\times \boldsymbol L^2(\Omega),\\
	\boldsymbol f&\mapsto(\boldsymbol u_{f}(T), \partial_t\boldsymbol u_{f}(T))
\end{align*}
is surjective, where $\boldsymbol  H^{-1} ( \Omega):= H^{-1} ( \Omega)^d$.
\end{lemm}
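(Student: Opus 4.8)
The plan is to deduce surjectivity of $\mathcal N$ from the continuous observability inequality \eqref{obs} via the standard duality between exact controllability and observability. The functional-analytic backbone is the elementary fact that a bounded linear operator $L$ between Hilbert spaces is surjective if and only if its adjoint $L^*$ is bounded below, i.e. $\|L^*\boldsymbol y\|\ge c\|\boldsymbol y\|$ for some $c>0$ and all $\boldsymbol y$. First I would record that $\mathcal N$ is bounded from $X:=\boldsymbol L^2((0,T)\times\partial\Omega)$ into $Y:=\boldsymbol H^{-1}(\Omega)\times\boldsymbol L^2(\Omega)$: its first component equals $\mathcal W_{\rho,T}\boldsymbol f=\boldsymbol u_f(T)$ by \eqref{ctm}, which is continuous by Lemma \ref{anW}, while the second component $\partial_t\boldsymbol u_f(T)$ is controlled by the same transposition solution theory. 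I would then identify the dual $Y'=\boldsymbol H_0^1(\Omega)\times\boldsymbol L^2(\Omega)$, taking $\boldsymbol L^2(\Omega)$ as the pivot space, so that $\mathcal N^*\colon Y'\to X$.

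The core step is to compute $\mathcal N^*$ explicitly and recognize it as a boundary traction. Given $(\boldsymbol v_0,\boldsymbol v_1)\in Y'$, let $\boldsymbol v$ be the finite-energy solution of the homogeneous Dirichlet problem \eqref{dual} with terminal data $\boldsymbol v(T)=\boldsymbol v_0$, $\partial_t\boldsymbol v(T)=\boldsymbol v_1$; this is well posed backward in time since \eqref{dual} is time-reversible and $(\boldsymbol v_0,\boldsymbol v_1)\in\boldsymbol H_0^1\times\boldsymbol L^2$. Multiplying $\partial_t^2\boldsymbol u_f-\mathcal L_\rho\boldsymbol u_f=\boldsymbol 0$ by $\rho\boldsymbol v$ and integrating over $(0,T)\times\Omega$, I would integrate by parts twice in $t$ (using $\boldsymbol u_f(0)=\partial_t\boldsymbol u_f(0)=\boldsymbol 0$) and apply Green's identity for the Lam\'e operator in $\boldsymbol x$ (exactly as in the computation \eqref{differeq}), using $\boldsymbol v=\boldsymbol 0$ and $\boldsymbol u_f=\boldsymbol f$ on the lateral boundary. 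The interior and initial contributions cancel, leaving the duality identity
\[
\langle\mathcal N\boldsymbol f,(\boldsymbol v_0,\boldsymbol v_1)\rangle_{Y,Y'}=-\langle\boldsymbol f,\,T_{\boldsymbol\nu}\boldsymbol v\rangle_{\boldsymbol L^2((0,T)\times\partial\Omega)},
\]
whence $\mathcal N^*(\boldsymbol v_0,\boldsymbol v_1)=-T_{\boldsymbol\nu}\boldsymbol v=-(\mu\partial_{\boldsymbol\nu}\boldsymbol v+\mu\nabla\boldsymbol v^\top\cdot\boldsymbol\nu+\lambda(\nabla\cdot\boldsymbol v)\boldsymbol\nu)$ on $(0,T)\times\partial\Omega$.

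With $\mathcal N^*$ so identified, the observability inequality closes the argument at once. Since $\|\mathcal N^*(\boldsymbol v_0,\boldsymbol v_1)\|_X=\|T_{\boldsymbol\nu}\boldsymbol v\|_{\boldsymbol L^2((0,T)\times\partial\Omega)}$ and the norm on $Y'$ satisfies $\|(\boldsymbol v_0,\boldsymbol v_1)\|_{Y'}\asymp\|\partial_t\boldsymbol v(T)\|_{\boldsymbol L^2(\Omega)}+\|\boldsymbol v(T)\|_{\boldsymbol H_0^1(\Omega)}$, applying \eqref{obs} to $\boldsymbol v$ with $\Gamma=\partial\Omega$ yields $\|(\boldsymbol v_0,\boldsymbol v_1)\|_{Y'}\le C_{obs}\|\mathcal N^*(\boldsymbol v_0,\boldsymbol v_1)\|_X$. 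This is precisely the lower bound $\|\mathcal N^*\boldsymbol y\|_X\ge C_{obs}^{-1}\|\boldsymbol y\|_{Y'}$, so $\mathcal N$ is surjective by the abstract lemma.

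I expect the main obstacle to be the rigorous justification of the duality identity at the low regularity at which these operators are defined. One must invoke the hidden regularity that the traction $T_{\boldsymbol\nu}\boldsymbol v$ of a finite-energy solution of \eqref{dual} genuinely lies in $\boldsymbol L^2((0,T)\times\partial\Omega)$, so that $\mathcal N^*$ is $X$-valued, and one must verify that the formal integration by parts is legitimate for the transposition solution $\boldsymbol u_f$ produced by an $\boldsymbol L^2$ boundary source. I would handle this by first establishing the identity for smooth $\boldsymbol f$ and smooth $(\boldsymbol v_0,\boldsymbol v_1)$ and then passing to the limit by density, relying on the continuity estimates of Lemmas \ref{bouncotro}--\ref{anW}; the remaining functional-analytic skeleton is then routine.
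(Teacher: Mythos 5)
Your proposal is correct and takes essentially the same route as the paper: you compute $\mathcal N^*$ by integration by parts as the boundary traction of the backward homogeneous solution, use the observability inequality \eqref{obs} to show $\mathcal N^*$ is bounded below, and conclude surjectivity by the standard duality criterion (the paper invokes Theorem 4.1 of \cite{Bardos1992} at this step, which is precisely your abstract lemma). The sign discrepancy ($-T_{\boldsymbol \nu}\boldsymbol v$ versus the paper's $+T_{\boldsymbol \nu}\boldsymbol u$) only reflects the choice of duality pairing and is immaterial to the argument.
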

\begin{proof}
Let  $\boldsymbol u$ be  a solution of
\begin{align*}
\begin{cases}
\partial_t^2\boldsymbol u(t, \boldsymbol x)-\mathcal L_\rho\boldsymbol u(t, \boldsymbol x)=\boldsymbol 0 \quad &\text{in}~ (0, T)\times\Omega,\\
\boldsymbol u(T, \boldsymbol x)=\boldsymbol u(T), \quad \partial_t\boldsymbol u (T, \boldsymbol x)=\partial_t\boldsymbol u (T) \quad &\text{in}~  \Omega,\\
\boldsymbol u(t, \boldsymbol x)=\boldsymbol 0  \quad &\text{on}~  (0, T)\times\partial\Omega,
\end{cases}
\end{align*}
and define
\begin{align*}
(\mathcal N \boldsymbol f, (\boldsymbol u(T), \partial_t\boldsymbol u(T)))_{\boldsymbol L^2(\Omega;  \rho {\rm d}\boldsymbol x)}	:=\int_\Omega \big(\boldsymbol v(T)\partial_t\boldsymbol u(T)+\partial_t\boldsymbol v(T)\boldsymbol u(T)\big)\rho {\rm d}\boldsymbol x,
\end{align*}
where $\boldsymbol v$ is a solution of \eqref{ab}. Notice that
\begin{align*}
	0=&\int_\Omega\int_0^T \boldsymbol v\cdot(\partial_t^2\boldsymbol u-\mathcal L_\rho \boldsymbol u)\rho-\boldsymbol u\cdot(\partial_t^2\boldsymbol v-\mathcal L_\rho \boldsymbol v)\rho {\rm d}t {\rm d}\boldsymbol x\\
	=&\int_\Omega(\boldsymbol v\cdot\partial_t\boldsymbol u -\boldsymbol u\cdot\partial_t\boldsymbol v)\rho{\rm d}\boldsymbol x\big|_{0}^T-\int_0^T \int_{\partial\Omega}\boldsymbol v\cdot\big(\mu(\nabla\boldsymbol u+\nabla\boldsymbol u^\top)\cdot\boldsymbol \nu+\lambda(\nabla\cdot\boldsymbol u)\boldsymbol \nu\big) {\rm d}S
 {\rm d}t\\
	 &+\int_0^T \int_{\partial\Omega}\boldsymbol u\big(\mu(\nabla\boldsymbol v+\nabla\boldsymbol v^\top)\cdot\boldsymbol \nu+\lambda(\nabla\cdot\boldsymbol v)\boldsymbol \nu\big) {\rm d}S
 {\rm d}t\\
=&\int_\Omega \big(\boldsymbol v(T)\partial_t\boldsymbol u(T)+\partial_t\boldsymbol v(T)\boldsymbol u(T)\big)\rho {\rm d}\boldsymbol x\\
&-\int_0^T \int_{\partial\Omega}\boldsymbol f\cdot\big(\mu(\nabla\boldsymbol u+\nabla\boldsymbol u^\top)\cdot\boldsymbol \nu+\lambda(\nabla\cdot\boldsymbol u)\boldsymbol \nu\big) {\rm d}S
 {\rm d}t,
\end{align*}
which then gives
\begin{align*}
(\mathcal N \boldsymbol f, (\boldsymbol u(T), \partial_t\boldsymbol u(T)))_{\boldsymbol L^2(\Omega;  \rho {\rm d}\boldsymbol x)}=\langle\boldsymbol f, (\mu(\nabla\boldsymbol u+\nabla\boldsymbol u^\top)\cdot\boldsymbol \nu+\lambda\nabla\cdot\boldsymbol u\boldsymbol \nu)\rangle_{\boldsymbol L^2((0, T)\times\partial\Omega)}.
\end{align*}
Then  its adjoint operator $\mathcal N^*: \boldsymbol H^1_0(\Omega)\times \boldsymbol L^2(\Omega)\rightarrow  \boldsymbol L^2((0, T)\times \partial\Omega)$ satisfies
\begin{align*}
\mathcal N^*(\boldsymbol u(T), \partial_t\boldsymbol u(T))=\mu(\nabla\boldsymbol u+\nabla\boldsymbol u^\top)\cdot\boldsymbol \nu+\lambda\nabla\cdot\boldsymbol u\boldsymbol \nu.
\end{align*}
It follows from \eqref{obs} that
\begin{align*}
	\|\partial_t \boldsymbol u(T)\|_{\boldsymbol L^2(\Omega)}+\|\boldsymbol u(T)&\|_{\boldsymbol H_0^1(\Omega)}\leq C_{obs}\|\mu\partial_{\boldsymbol \nu}\boldsymbol u+\mu\nabla\boldsymbol u^\top\cdot\boldsymbol \nu+\lambda(\nabla\cdot\boldsymbol u)\boldsymbol \nu\|_{\boldsymbol L^2((0, T)\times\Gamma)}.
\end{align*}
Hence the operator $\mathcal N$ is surjective by Theorem 4.1 in \cite{Bardos1992}.

This ends the proof of the lemma.
\end{proof}

In view of Lemma \ref{le:surjective}, $\mathcal{W}_{\rho, T}$ is surjective.
Therefore, if the continuous observability inequality \eqref{obs} is valid, then
\[\mathcal  W^\dag_{\rho, T}: \boldsymbol L^2(\Omega)\rightarrow \boldsymbol L^2((0, T)\times\partial\Omega)\]
is continuous and $\mathcal W_{\rho, T}\mathcal W_{\rho, T}^\dag$ is the identity operator. Moreover, since the range $R(\mathcal W_{\rho, T})$ is closed, the range
\begin{align*}
	R(\mathcal J(\Lambda_{\rho, 2T}))=R(\mathcal W_{\rho, T}^*\mathcal W_{\rho, T})=R(\mathcal W_{\rho, T}^*)
\end{align*}
 of $\mathcal J(\Lambda_{\rho, 2T})$ is  closed as well. As a result, the pseudo-inverse operator $\mathcal J^\dag(\Lambda_{\rho, 2T})$ is continuous on $\boldsymbol L^2((0, T)\times \partial\Omega)$ and $\mathcal J(\Lambda_{\rho, 2T}) \mathcal J^\dag(\Lambda_{\rho, 2T})$ is the orthogonal projection onto $R(\mathcal J(\Lambda_{\rho, 2T}))$.
\subsection{A reconstruction formula for the  density}
This subsection aim at establishing a reconstruction formula for the  density.
\begin{theo}\label{t1}
Let $\rho\in \mathcal C^\infty(\Omega)$ be positive and assume that equation \eqref{ab} is continuously observable from $\partial\Omega$ in time $T>0$.
Define the following functions
\begin{align*}
 \boldsymbol \phi:=\boldsymbol\iota e^{\rm i\boldsymbol\theta\cdot \boldsymbol x},~\quad
 \boldsymbol \psi:=\overline{\boldsymbol\iota } e^{\rm i\overline{\boldsymbol \theta}\cdot \boldsymbol x}
\end{align*}
with $\boldsymbol \iota\in \mathbb C^d$ and
\begin{align*}
\boldsymbol \theta= (\boldsymbol\xi+\rm i\boldsymbol\eta)/2,~\quad  \boldsymbol \xi,~\boldsymbol\eta\in \mathbb{R}^d, \quad|\boldsymbol \xi|=|\boldsymbol \eta|.
\end{align*}
Suppose, furthermore, that $\boldsymbol\iota$ and $\boldsymbol\theta$ satisfy
\begin{align*}
	{\rm i}(\boldsymbol\theta \cdot\boldsymbol \iota^\top+\boldsymbol \iota\cdot\boldsymbol \theta^\top)\nabla \mu+{\rm i}(\boldsymbol \iota\cdot\boldsymbol\theta)\nabla \lambda-\mu(\boldsymbol \theta\cdot\boldsymbol\theta\boldsymbol)\boldsymbol\iota-(\lambda+\mu)(\boldsymbol \iota\cdot\boldsymbol\theta)\boldsymbol\theta=\boldsymbol 0
\end{align*}
with $|\boldsymbol \iota|\neq 0$.
Then
\begin{align}
\mathcal{F}(\rho)(\boldsymbol\xi)=\frac{1}{|\boldsymbol\iota|^2}\langle \mathcal J^\dag(\Lambda_{\rho, 2T}) \mathcal K(\Lambda_{\rho, T})\boldsymbol \phi,  \mathcal K(\Lambda_{\rho, T})\boldsymbol \psi\rangle_{\boldsymbol L^2((0, T)\times \partial\Omega)},\label{af}
\end{align}
where $\mathcal F(\rho)$ stands for the Fourier transform of the extension by zero of $\rho$ on $\mathbb R^d$.
\end{theo}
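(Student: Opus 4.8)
The plan is to collapse the boundary pairing on the right of \eqref{af} onto the weighted inner product $(\boldsymbol\phi,\boldsymbol\psi)_{\boldsymbol L^2(\Omega;\rho{\rm d}\boldsymbol x)}$ and then to read off this inner product as a Fourier coefficient of $\rho$. As a preliminary step I would record that \emph{both} $\boldsymbol\phi$ and $\boldsymbol\psi$ solve the elastostatic system, so that each is a CGO solution to which the modified Blagove\v{s}\v{c}enski\u{\i} identity \eqref{ah} applies. For $\boldsymbol\phi$ this is exactly the standing hypothesis \eqref{cgo}; for $\boldsymbol\psi=\overline{\boldsymbol\iota}\,e^{{\rm i}\overline{\boldsymbol\theta}\cdot\boldsymbol x}$ one checks the analogous condition for the conjugated pair $\overline{\boldsymbol\iota},\overline{\boldsymbol\theta}$, which holds under the present hypotheses since the Lam\'{e} coefficients are real and $\overline{\boldsymbol\iota}\cdot\overline{\boldsymbol\theta}=\overline{\boldsymbol\iota\cdot\boldsymbol\theta}$, $\overline{\boldsymbol\theta}\cdot\overline{\boldsymbol\theta}=\overline{\boldsymbol\theta\cdot\boldsymbol\theta}$ (for instance in the regime of the remark following \eqref{ah}, where $\mu$ is constant and $\boldsymbol\iota\cdot\boldsymbol\theta=0$, the condition for $\overline{\boldsymbol\iota},\overline{\boldsymbol\theta}$ reduces to the same vanishing identity). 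Granting this, the identification recorded after \eqref{E:K} yields $\mathcal K(\Lambda_{\rho,T})\boldsymbol\phi=\mathcal W_{\rho,T}^{*}\boldsymbol\phi$ and $\mathcal K(\Lambda_{\rho,T})\boldsymbol\psi=\mathcal W_{\rho,T}^{*}\boldsymbol\psi$, where $\mathcal W_{\rho,T}^{*}$ is the adjoint taken with respect to $(\cdot,\cdot)_{\boldsymbol L^2(\Omega;\rho{\rm d}\boldsymbol x)}$ on $\boldsymbol L^2(\Omega)$ and $\langle\cdot,\cdot\rangle$ on $\boldsymbol L^2((0,T)\times\partial\Omega)$.

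Next I would substitute these two identities into \eqref{af} and transfer $\mathcal W_{\rho,T}^{*}$ across the pairing by means of the adjoint relation $\langle\mathcal W_{\rho,T}^{*}\boldsymbol a,\boldsymbol g\rangle_{\boldsymbol L^2((0,T)\times\partial\Omega)}=(\boldsymbol a,\mathcal W_{\rho,T}\boldsymbol g)_{\boldsymbol L^2(\Omega;\rho{\rm d}\boldsymbol x)}$, which together with the symmetry of the two bilinear pairings gives
\[
\langle\mathcal J^\dag(\Lambda_{\rho,2T})\mathcal K(\Lambda_{\rho,T})\boldsymbol\phi,\ \mathcal K(\Lambda_{\rho,T})\boldsymbol\psi\rangle_{\boldsymbol L^2((0,T)\times\partial\Omega)}=\big(\mathcal W_{\rho,T}\,\mathcal J^\dag(\Lambda_{\rho,2T})\,\mathcal W_{\rho,T}^{*}\boldsymbol\phi,\ \boldsymbol\psi\big)_{\boldsymbol L^2(\Omega;\rho{\rm d}\boldsymbol x)}.
\]
Thus everything reduces to showing that the operator $\mathcal W_{\rho,T}\mathcal J^\dag(\Lambda_{\rho,2T})\mathcal W_{\rho,T}^{*}$ fixes $\boldsymbol\phi$.

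This is the crux, and it is where continuous observability is consumed. Since \eqref{ab} is continuously observable from $\partial\Omega$, Lemma \ref{le:surjective} makes $\mathcal W_{\rho,T}$ surjective, hence $\mathcal W_{\rho,T}^{*}$ injective; moreover $\mathcal J(\Lambda_{\rho,2T})=\mathcal W_{\rho,T}^{*}\mathcal W_{\rho,T}$ has closed range $R(\mathcal J(\Lambda_{\rho,2T}))=R(\mathcal W_{\rho,T}^{*})$, and $\mathcal J(\Lambda_{\rho,2T})\mathcal J^\dag(\Lambda_{\rho,2T})$ is the orthogonal projection onto that range. Because $\mathcal W_{\rho,T}^{*}\boldsymbol\phi\in R(\mathcal W_{\rho,T}^{*})=R(\mathcal J(\Lambda_{\rho,2T}))$, this projection fixes it, i.e. $\mathcal W_{\rho,T}^{*}\mathcal W_{\rho,T}\mathcal J^\dag(\Lambda_{\rho,2T})\mathcal W_{\rho,T}^{*}\boldsymbol\phi=\mathcal W_{\rho,T}^{*}\boldsymbol\phi$. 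Writing $\boldsymbol\zeta:=\mathcal W_{\rho,T}\mathcal J^\dag(\Lambda_{\rho,2T})\mathcal W_{\rho,T}^{*}\boldsymbol\phi$, this reads $\mathcal W_{\rho,T}^{*}(\boldsymbol\zeta-\boldsymbol\phi)=\boldsymbol 0$, so injectivity of $\mathcal W_{\rho,T}^{*}$ forces $\boldsymbol\zeta=\boldsymbol\phi$. Hence the right-hand side of \eqref{af} equals $\tfrac{1}{|\boldsymbol\iota|^2}(\boldsymbol\phi,\boldsymbol\psi)_{\boldsymbol L^2(\Omega;\rho{\rm d}\boldsymbol x)}$.

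It remains only to evaluate this inner product. From $\boldsymbol\theta=(\boldsymbol\xi+{\rm i}\boldsymbol\eta)/2$ one has $\boldsymbol\theta+\overline{\boldsymbol\theta}=\boldsymbol\xi$ and $\boldsymbol\iota\cdot\overline{\boldsymbol\iota}=|\boldsymbol\iota|^2$, so $\boldsymbol\phi\cdot\boldsymbol\psi=(\boldsymbol\iota\cdot\overline{\boldsymbol\iota})\,e^{{\rm i}(\boldsymbol\theta+\overline{\boldsymbol\theta})\cdot\boldsymbol x}=|\boldsymbol\iota|^2 e^{{\rm i}\boldsymbol\xi\cdot\boldsymbol x}$, whence
\[
(\boldsymbol\phi,\boldsymbol\psi)_{\boldsymbol L^2(\Omega;\rho{\rm d}\boldsymbol x)}=|\boldsymbol\iota|^2\int_\Omega e^{{\rm i}\boldsymbol\xi\cdot\boldsymbol x}\rho(\boldsymbol x){\rm d}\boldsymbol x=|\boldsymbol\iota|^2\,\mathcal F(\rho)(\boldsymbol\xi),
\]
using that $\rho$ is extended by zero outside $\Omega$. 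Dividing by $|\boldsymbol\iota|^2$ recovers \eqref{af}. I expect the operator identity $\mathcal W_{\rho,T}\mathcal J^\dag(\Lambda_{\rho,2T})\mathcal W_{\rho,T}^{*}\boldsymbol\phi=\boldsymbol\phi$ to be the main obstacle, since it is precisely here that observability enters (through surjectivity of $\mathcal W_{\rho,T}$) and one must track the $\rho$-weighted adjoint carefully and exploit that $\mathcal W_{\rho,T}^{*}\boldsymbol\phi$ lies in the closed range $R(\mathcal J(\Lambda_{\rho,2T}))$, so that the pseudo-inverse acts there as a genuine inverse. A secondary point deserving care is the preliminary check that $\boldsymbol\psi$, and not merely $\boldsymbol\phi$, is a CGO solution, since \eqref{ah} is available only for elastostatic solutions.
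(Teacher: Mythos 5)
Your proposal is correct and takes essentially the same route as the paper: both identify $\mathcal K(\Lambda_{\rho, T})$ with $\mathcal W_{\rho, T}^{*}$ on the CGO solutions, exploit the surjectivity of $\mathcal W_{\rho, T}$ granted by continuous observability (Lemma \ref{le:surjective}) to collapse the boundary pairing onto $(\boldsymbol\phi, \boldsymbol\psi)_{\boldsymbol L^2(\Omega;\rho{\rm d}\boldsymbol x)}$, and finish with the same Fourier evaluation; the paper merely runs the chain in the opposite direction, using $\mathcal W_{\rho,T}\mathcal W_{\rho,T}^{\dag}=I$ together with $\mathcal W_{\rho,T}^{\dag}=(\mathcal W_{\rho,T}^{*}\mathcal W_{\rho,T})^{\dag}\mathcal W_{\rho,T}^{*}$, whereas you establish the equivalent identity $\mathcal W_{\rho,T}\mathcal J^{\dag}(\Lambda_{\rho,2T})\mathcal W_{\rho,T}^{*}\boldsymbol\phi=\boldsymbol\phi$ via the projection property of $\mathcal J(\Lambda_{\rho,2T})\mathcal J^{\dag}(\Lambda_{\rho,2T})$ plus injectivity of $\mathcal W_{\rho,T}^{*}$ --- the same ingredients in different bookkeeping. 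Your preliminary check that $\boldsymbol\psi$ (and not only $\boldsymbol\phi$) solves the elastostatic system, so that \eqref{ah} applies to it, addresses a point the paper's proof leaves implicit, which is a refinement rather than a deviation.
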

\begin{proof}
Taking the extension by zero of $\rho$ onto $\mathbb R^d$, still remark as $  \rho$.
It follows from the definitions of $\boldsymbol \phi$ and $\boldsymbol \psi$ that
\begin{align*}
 (\boldsymbol \phi, \boldsymbol \psi)_{\boldsymbol L^2(\Omega; \rho{\rm d}\boldsymbol x)}
 &=|\boldsymbol\iota|^2\int_\Omega e^{\rm i\boldsymbol x\cdot \boldsymbol\xi}\rho{\rm d}\boldsymbol x=|\boldsymbol\iota|^2\int_{\mathbb R^d} e^{\rm i\boldsymbol x\cdot \boldsymbol\xi} \rho {\rm d}\boldsymbol x=|\boldsymbol\iota|^2\mathcal{F}(\rho)(\boldsymbol\xi).
\end{align*}

Moreover, since
$\mathcal K(\Lambda_{\rho, T})$ is the restrictions of $\mathcal W_{\rho, T}^*$ on $\boldsymbol \phi, \boldsymbol \psi$ and 
\begin{align*}
\mathcal W_{\rho, T}^\dag=(\mathcal W_{\rho, T}^*\mathcal W_{\rho, T})^\dag \mathcal W_{\rho, T}^*,
\end{align*}
a direct calculation yields that
\begin{align*}
(\boldsymbol \phi,  \boldsymbol \psi)_{\boldsymbol L^2(\Omega; \rho {\rm d}\boldsymbol x)}=&(\mathcal W_{\rho, T} \mathcal W^\dag _{\rho, T}\boldsymbol \phi,  \boldsymbol \psi)_{\boldsymbol L^2(\Omega; \rho{\rm d}\boldsymbol x)}\\
=&\langle \mathcal W^\dag_{\rho, T} \boldsymbol \phi,  \mathcal K(\Lambda_{\rho, T})\boldsymbol \psi\rangle_{\boldsymbol L^2((0, T)\times\partial\Omega)}\\
=&\langle \mathcal J^\dag(\Lambda_{\rho, 2T}) \mathcal K(\Lambda_{\rho, T})\boldsymbol \phi,  \mathcal K(\Lambda_{\rho, T})\boldsymbol \psi\rangle_{\boldsymbol L^2((0, T)\times\partial\Omega)}.
\end{align*}
We have completed the proof of the theorem.
\end{proof}

\section{Stable observability for the elastic wave equation}\label{4}

In this section, we will present a  Carleman estimate  and  illustrate the stable observability of the elastic wave system.
In fact, in order to
 give the local Lipschitz stability, the system is required to be  stable observability due to the nonlinear property of the inverse medium problem.

To make the Carleman estimate valid,  we have to define an admissible set associated with Lam\'{e} coefficients as follows
\begin{align}\label{nessary}
	\mathscr H:=\Big\{\mu, \lambda\in\mathcal C^1(\Omega):&(\mu(\nabla\boldsymbol u+\nabla\boldsymbol u^\top)+\lambda(\nabla\cdot\boldsymbol u)I_d):\nabla \boldsymbol u \geq c_0|\nabla\boldsymbol u|^2, \quad \text{and}~\nonumber\\
	&(\mu(\nabla\boldsymbol u+\nabla\boldsymbol u^\top)+\lambda(\nabla\cdot\boldsymbol u)I_d):\nabla \boldsymbol u-\underset{i=1}{\overset{d}{\sum}}((\partial_{x_i}\mu(\nabla\boldsymbol u+\nabla\boldsymbol u^\top) \nonumber\\
	&+\partial_{x_i}\lambda (\nabla\cdot \boldsymbol u)I_d):\nabla \boldsymbol u)\partial_{x_i}l\geq c_1|\nabla\boldsymbol u|^2\Big\},
\end{align}
where $c_0,  c_1$ are two positive constants.

\subsection{A Carleman estimate for the elastic wave equation}

We first give the following  auxiliary result.

\begin{lemm}\label{le:car1}
For $\boldsymbol w\in \boldsymbol{\mathcal{C}}^2(\mathbb{R}\times\Omega)$, $l\in \mathcal{C}^2(\Omega)$, we set
$\boldsymbol v=e^{\tau l}\boldsymbol w$,
where $\tau>0$ is a constant. One has
\begin{align}\label{a}
S_1\cdot S_2\leq\frac{e^{2\tau l}}{2}\big|\rho\partial_{t}^2\boldsymbol w-\nabla\cdot(\mu(\nabla\boldsymbol w+\nabla\boldsymbol w^\top))-\nabla(\lambda\nabla\cdot\boldsymbol w)\big|^2
\end{align}
with
\begin{align*}
 	S_1:=&\rho\partial^2_t\boldsymbol v-\nabla\cdot(\mu (\nabla\boldsymbol v+\nabla \boldsymbol v^{\top}))- \nabla(\lambda \nabla\cdot\boldsymbol v)+\mu\tau\Delta l\boldsymbol v-\mu\tau^2|\nabla l|^2\boldsymbol v\nonumber\\
 	&+(\lambda+\mu)\tau\nabla\nabla l\cdot\boldsymbol v-(\lambda+\mu)\tau^2\nabla l\cdot \nabla l^\top\cdot\boldsymbol v\\
 	&+\tau(\nabla l\cdot \boldsymbol v^\top+\boldsymbol v\cdot\nabla l^\top)\cdot\nabla \mu+\tau\nabla l \cdot\boldsymbol v\nabla \lambda+2(\mu-1)\tau\nabla\boldsymbol v\cdot\nabla l\\
 	&+(\lambda+\mu)\tau\nabla\boldsymbol v^\top\cdot\nabla l+(\lambda+\mu)\tau\nabla\cdot\boldsymbol v\nabla l-\tau \mathscr C\boldsymbol v,\\
 	S_2:=&\tau(2\nabla\boldsymbol v\cdot\nabla l+ \mathscr C\boldsymbol v),
 \end{align*}
where $\mathscr C$ is a suitable positive constant.
\end{lemm}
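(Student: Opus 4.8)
The plan is to recognize that $S_1$ and $S_2$ are, by construction, the two halves of the operator obtained by conjugating the elastic wave operator with the weight $e^{\tau l}$, so that the whole statement collapses to one algebraic identity together with a trivial inequality. Abbreviating the operator on the right-hand side of \eqref{a} by
\begin{equation*}
\mathcal P\boldsymbol w:=\rho\partial_t^2\boldsymbol w-\nabla\cdot(\mu(\nabla\boldsymbol w+\nabla\boldsymbol w^\top))-\nabla(\lambda\nabla\cdot\boldsymbol w),
\end{equation*}
I claim the conjugation identity
\begin{equation*}
e^{\tau l}\mathcal P\boldsymbol w=S_1+S_2 .
\end{equation*}
Granting it, the lemma is immediate from the elementary vector identity $2\,S_1\cdot S_2=|S_1+S_2|^2-|S_1|^2-|S_2|^2\le|S_1+S_2|^2$, whose right-hand side equals $e^{2\tau l}|\mathcal P\boldsymbol w|^2$; this is precisely \eqref{a}. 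Hence no genuine estimation is required, and all the content lies in verifying the identity.

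To prove it I would substitute $\boldsymbol w=e^{-\tau l}\boldsymbol v$ into $\mathcal P$ and multiply by $e^{\tau l}$. Since $l=l(\boldsymbol x)$ is time-independent, the time derivative passes through, $e^{\tau l}\rho\partial_t^2(e^{-\tau l}\boldsymbol v)=\rho\partial_t^2\boldsymbol v$, and the work is confined to the elliptic part. Starting from $\nabla\boldsymbol w=e^{-\tau l}(\nabla\boldsymbol v-\tau\,\boldsymbol v\cdot\nabla l^\top)$, $\nabla\boldsymbol w^\top=e^{-\tau l}(\nabla\boldsymbol v^\top-\tau\,\nabla l\cdot\boldsymbol v^\top)$ and $\nabla\cdot\boldsymbol w=e^{-\tau l}(\nabla\cdot\boldsymbol v-\tau\,\nabla l\cdot\boldsymbol v)$, I would insert these into $\mu(\nabla\boldsymbol w+\nabla\boldsymbol w^\top)$ and $\lambda(\nabla\cdot\boldsymbol w)$, apply the divergence and gradient, restore the factor $e^{\tau l}$, and sort the resulting terms by the order of differentiation on $\boldsymbol v$ and the power of $\tau$. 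The second-order terms reproduce $-\nabla\cdot(\mu(\nabla\boldsymbol v+\nabla\boldsymbol v^\top))-\nabla(\lambda\nabla\cdot\boldsymbol v)$; the $\tau^2$ terms with no derivative on $\boldsymbol v$ yield $-\mu\tau^2|\nabla l|^2\boldsymbol v$ and $-(\lambda+\mu)\tau^2\nabla l\cdot\nabla l^\top\cdot\boldsymbol v$; the $\tau$ terms where both derivatives fall on $l$ or one falls on $\mu,\lambda$ produce $\mu\tau\Delta l\,\boldsymbol v$, $(\lambda+\mu)\tau\nabla\nabla l\cdot\boldsymbol v$ and the $\nabla\mu,\nabla\lambda$ contributions; and the $\tau$ terms with a single derivative on $\boldsymbol v$ produce the first-order pieces carried by $\nabla\boldsymbol v\cdot\nabla l$, $\nabla\boldsymbol v^\top\cdot\nabla l$ and $(\nabla\cdot\boldsymbol v)\nabla l$.

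The only non-mechanical point---and the reason for the two apparently artificial ingredients, the coefficient $2(\mu-1)$ and the constant $\mathscr C$---is how the genuine first-order gradient term is partitioned between $S_1$ and $S_2$. The symmetric-gradient part of the conjugation contributes $2\mu\tau\,\nabla\boldsymbol v\cdot\nabla l$; writing $2\mu\tau=2(\mu-1)\tau+2\tau$, I would assign $2(\mu-1)\tau\,\nabla\boldsymbol v\cdot\nabla l$ to $S_1$ and the coefficient-free piece $2\tau\,\nabla\boldsymbol v\cdot\nabla l$ to $S_2$, the latter being the clean first-order operator that will later be integrated by parts against $S_1$ in the Carleman argument. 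At the same time I would add and subtract $\tau\mathscr C\boldsymbol v$, placing $+\tau\mathscr C\boldsymbol v$ in $S_2$ and $-\tau\mathscr C\boldsymbol v$ in $S_1$ so that it drops out of the sum; this is why $\mathscr C$ can remain an unspecified positive constant here and is fixed only later, when a coercivity property of $S_2$ is needed. After this bookkeeping the collected terms match $S_1+S_2$ verbatim, establishing the conjugation identity.

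I expect the main obstacle to be nothing more than the tensor bookkeeping: keeping the Frobenius contractions consistent, tracking the placement of transposes in $\boldsymbol v\cdot\nabla l^\top$ versus $\nabla l\cdot\boldsymbol v^\top$, and handling the variable-coefficient terms $\nabla\mu,\nabla\lambda$ without sign errors. There is no analytic difficulty, since once the conjugation identity holds the inequality \eqref{a} is just the completion-of-squares bound recorded above.
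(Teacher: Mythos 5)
Your proposal is correct and follows essentially the same route as the paper: the paper likewise establishes the conjugation identity $e^{\tau l}\mathcal P\boldsymbol w=S_1+S_2$ (via explicit formulas for $e^{\tau l}\nabla\cdot(\mu(\nabla\boldsymbol w+\nabla\boldsymbol w^\top))$ and $e^{\tau l}\nabla(\lambda\nabla\cdot\boldsymbol w)$ in terms of $\boldsymbol v$, with the same split $2\mu\tau=2(\mu-1)\tau+2\tau$ and the inserted $\pm\tau\mathscr C\boldsymbol v$), and then concludes from $\tfrac{1}{2}|S_1+S_2|^2=\tfrac{1}{2}(|S_1|^2+|S_2|^2)+S_1\cdot S_2\ge S_1\cdot S_2$.
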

\begin{proof}
From the identities
\begin{align*}
&e^{\tau l}\nabla\cdot(\mu(\nabla\boldsymbol w+\nabla\boldsymbol w^\top))\\
=&\nabla\cdot(\mu(\nabla \boldsymbol v+\nabla\boldsymbol v^\top))-\mu\tau\Delta l\boldsymbol v-2\mu\tau\nabla\boldsymbol v\cdot\nabla l+\mu\tau^2|\nabla l|^2\boldsymbol v\\
&-\mu\tau\nabla\nabla l\cdot \boldsymbol v-\mu\tau\nabla\cdot \boldsymbol v\nabla l-\mu\tau\nabla \boldsymbol v^\top\cdot\nabla l+\mu\tau^2\nabla l\cdot\nabla l^\top\cdot \boldsymbol v\\
&-\tau(\nabla l\cdot \boldsymbol v^\top+\boldsymbol v\cdot\nabla l^\top)\cdot\nabla \mu,
\end{align*}
and
\begin{align*}
e^{\tau l}\nabla(\lambda\nabla\cdot\boldsymbol w)=&\nabla(\lambda\nabla\cdot\boldsymbol v)-\lambda\tau\nabla\nabla l\cdot\boldsymbol v-\lambda\tau\nabla\cdot\boldsymbol v\nabla l-\lambda\tau\nabla\boldsymbol v^\top\cdot\nabla l\\
&+\lambda\tau^2\nabla l\cdot \nabla l^\top\cdot\boldsymbol v-\tau\nabla l \cdot\boldsymbol v\nabla \lambda,
\end{align*}
it shows that
\begin{align*}
&\frac{e^{2\tau l}}{2}\big|\rho\partial_{t}^2\boldsymbol w-\nabla\cdot(\mu(\nabla\boldsymbol w+\nabla\boldsymbol w^\top))-\nabla(\lambda\nabla\cdot\boldsymbol w)\big|^2\\
=&\frac{1}{2}\big|\rho\partial^2_t\boldsymbol v- \nabla\cdot(\mu(\nabla\boldsymbol v+\nabla \boldsymbol v^{\top}))-\nabla(\lambda \nabla\cdot\boldsymbol v)+\mu\tau\Delta l\boldsymbol v\\
&-\mu\tau^2|\nabla l|^2\boldsymbol v+(\lambda+\mu)\tau\nabla\nabla l\cdot\boldsymbol v-(\lambda+\mu)\tau^2\nabla l\cdot \nabla l^\top\cdot\boldsymbol v\\
&+\tau(\nabla l\cdot \boldsymbol v^\top+\boldsymbol v\cdot\nabla l^\top)\cdot\nabla \mu+\tau\nabla l \cdot\boldsymbol v\nabla \lambda-\tau\mathscr C\boldsymbol v\\
&+2\mu\tau\nabla\boldsymbol v\cdot\nabla l+(\lambda+\mu)\tau\nabla\boldsymbol v^\top\cdot\nabla l+(\lambda+\mu)\tau\nabla\cdot\boldsymbol v\nabla l+\tau\mathscr C\boldsymbol v\big|^2\\
=&\frac{1}{2}\big|\big(\rho\partial^2_t\boldsymbol v- \nabla\cdot(\mu(\nabla\boldsymbol v+\nabla \boldsymbol v^{\top}))-\nabla(\lambda \nabla\cdot\boldsymbol v)+\mu\tau\Delta l\boldsymbol v\\
&-\mu\tau^2|\nabla l|^2\boldsymbol v+(\lambda+\mu)\tau\nabla\nabla l\cdot\boldsymbol v-(\lambda+\mu)\tau^2\nabla l\cdot \nabla l^\top\cdot\boldsymbol v\\
&+\tau(\nabla l\cdot \boldsymbol v^\top+\boldsymbol v\cdot\nabla l^\top)\cdot\nabla \mu+\tau\nabla l \cdot\boldsymbol v\nabla \lambda\\
&+2(\mu-1)\tau\nabla\boldsymbol v\cdot\nabla l+(\lambda+\mu)\tau\nabla\boldsymbol v^\top\cdot\nabla l\\
&+(\lambda+\mu)\tau\nabla\cdot\boldsymbol v\nabla l-\tau\mathscr C\boldsymbol v\big)+\tau(2\nabla\boldsymbol v\cdot\nabla l+\mathscr C\boldsymbol v)\big|^2\\
=&\frac{1}{2}(|S_1|^2+|S_2|^2)+S_1\cdot S_2.
\end{align*}
The proof is complete.
\end{proof}

Let $\boldsymbol u$ be a solution of system \eqref{dual}. For $\mu, \lambda \in \mathscr H$,
 we define an energy function
\begin{align}\label{energy1}
E(t):=\int_\Omega|\sqrt{\rho}\partial_t\boldsymbol u|^2
+\big(\mu(\nabla \boldsymbol u+\nabla\boldsymbol u^\top)+\lambda(\nabla\cdot\boldsymbol u)I_d\big): \nabla\boldsymbol u {\rm d}\boldsymbol x, \quad  t\in [0, \infty),
\end{align}
where $I_d$ is a $d\times d$ unit matrix.
It follows from
\begin{align*}
E(t)-E(0)=&\int_0^t\partial_s E(s) {\rm d}s\\
=&\int_0^t\int_\Omega\frac{\partial}{\partial s}\big(|\sqrt{\rho}\partial_s\boldsymbol u|^2+(\mu(\nabla \boldsymbol u+\nabla\boldsymbol u^\top)+\lambda(\nabla\cdot\boldsymbol u)I_d): \nabla\boldsymbol u\big) {\rm d}\boldsymbol x {\rm d}s\\
=&\int_0^t\int_\Omega2\rho\partial^2_s\boldsymbol u\cdot\partial_s\boldsymbol u+2\mu(\nabla \boldsymbol u+\nabla\boldsymbol u^\top):\partial_s\nabla\boldsymbol u\\
&+2\lambda(\partial_s\nabla\cdot\boldsymbol u)(\nabla\cdot\boldsymbol u) {\rm d}\boldsymbol x {\rm d}s\\
=&2\int_0^t\int_\Omega(\nabla\cdot(\mu(\nabla\boldsymbol u+\nabla\boldsymbol u^\top))+\nabla(\lambda\nabla\cdot\boldsymbol u))\cdot\partial_s\boldsymbol u\\
&+\mu (\partial_s\nabla\boldsymbol u+\partial_s\nabla\boldsymbol u^\top):\nabla\boldsymbol u+\lambda(\partial_s\nabla\cdot\boldsymbol u)(\nabla\cdot\boldsymbol u) {\rm d}\boldsymbol x {\rm d}s\\
=&\int_0^t\int_\Omega-2\mu(\nabla\boldsymbol u+\nabla\boldsymbol u^\top):\partial_s\nabla\boldsymbol u-2\lambda(\nabla\cdot\boldsymbol u)(\partial_s\nabla\cdot\boldsymbol u)\\
&+2\mu(\nabla \boldsymbol u+\nabla\boldsymbol u^\top):\partial_s\nabla\boldsymbol u+2\lambda(\partial_s\nabla\cdot\boldsymbol u)(\nabla\cdot\boldsymbol u) {\rm d}\boldsymbol x {\rm d}s\\
=&0
\end{align*}
 that $E(t)=E(0)$ is a constant for $t\in [0, \infty)$.

Moreover, let  $\Gamma\subset\partial\Omega$  be an open set given by
\begin{align}\label{gamma}
\Gamma:=\big\{\boldsymbol x\in \partial\Omega:(\nabla l\cdot \boldsymbol \nu)>0\big\}\subset\partial\Omega.
\end{align}	

Thanks to Lemma \ref{le:car1} and the energy function defined in \eqref{energy1}, we derive the following observability inequality.

\begin{theo}\label{obineq}
Let $\boldsymbol u\in \boldsymbol{\mathcal{C}}^2([0, T]\times\Omega)$ be a solution of system \eqref{dual} and set $\boldsymbol v=e^{\tau l}\boldsymbol u$ with $\tau>0$ and
\begin{align*}
l(\boldsymbol x):=|\boldsymbol x-\boldsymbol x_0|^2/2\quad\text{for fixed } \boldsymbol x_0=(x^1_{0}, x^2_{0},\cdots, x^d_{0})^{\top}\in \mathbb{R}^d\backslash \overline{\Omega}.
\end{align*}
Suppose that
\[\mathscr C:=d-1,~\quad T>{2C_2C_3}/{C_0}.\]
If $\mu,\lambda$ belong to $\mathscr{H}$ and  $\rho$ satisfies that for some constant $\rho_2>0$,
\[1+(\nabla\rho\cdot\nabla l)/\rho>\rho_2>0,\]
then
\begin{align*}
E(0)\leq&\frac{C_1C_3}{TC_0-2C_2C_3}\int_0^T\int_{\Gamma}|\mu\partial_{\boldsymbol \nu}\boldsymbol u+\mu\nabla\boldsymbol u^\top\cdot\boldsymbol \nu+\lambda(\nabla\cdot\boldsymbol u)\boldsymbol \nu|^2 {\rm d}S {\rm d}t
\end{align*}
\text{as} $\tau\rightarrow0$, where
\begin{align*}
C_0&:=\min\{\rho_2, c_1\},\\
C_1&:=\max\big\{\sqrt 2, \sqrt{1+\mu_0}/(2\mu_0), \sqrt{5/(4\mu_0)}\big\}\max\{|\nabla l|\},\\
C_2&:=\max\big\{\rho_1/2, 2c_0^{-1}\max\{|\nabla l|^2\}\big\},\\
 C_3&:=\max\{1, 2\mu_1+\max\{|\lambda_0|, |\lambda_1|\}\}.
\end{align*}
\end{theo}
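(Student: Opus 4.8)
The plan is to run a Rellich--Pohozaev (multiplier) argument built on the pointwise inequality behind Lemma \ref{le:car1}, with $S_2$ serving as the first-order multiplier. Since $\boldsymbol u$ solves \eqref{dual}, we have $\rho\partial_t^2\boldsymbol u-\nabla\cdot(\mu(\nabla\boldsymbol u+\nabla\boldsymbol u^\top))-\nabla(\lambda\nabla\cdot\boldsymbol u)=\boldsymbol 0$, so the right-hand side of \eqref{a} vanishes and $S_1\cdot S_2\le 0$ pointwise. Integrating over $(0,T)\times\Omega$ gives $\int_0^T\int_\Omega S_1\cdot S_2\,{\rm d}\boldsymbol x\,{\rm d}t\le 0$. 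The heart of the proof is to re-expand this integral by integration by parts in $t$ and $\boldsymbol x$ against $S_2=\tau(2\nabla\boldsymbol v\cdot\nabla l+\mathscr C\boldsymbol v)$, isolating the principal pairing of the Lam\'{e} wave operator on $\boldsymbol v$ with $2\nabla\boldsymbol v\cdot\nabla l+\mathscr C\boldsymbol v$; the remaining pieces of $S_1$ carry explicit powers of $\tau$ and will be discarded in the limit. The outcome is sorted into an interior quadratic form, temporal boundary terms at $t=0,T$, and a spatial boundary term on $\partial\Omega$.

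First I would establish coercivity of the interior form. Integrating the kinetic part $\rho\partial_t^2\boldsymbol v$ by parts in $t$ and then in $\boldsymbol x$ produces $\int\rho\big(\Delta l+\nabla\rho\cdot\nabla l/\rho-\mathscr C\big)|\partial_t\boldsymbol v|^2$; the choices $l=|\boldsymbol x-\boldsymbol x_0|^2/2$ (so $\Delta l=d$ and $\nabla\nabla l=I_d$) together with $\mathscr C=d-1$ collapse this coefficient to $1+\nabla\rho\cdot\nabla l/\rho>\rho_2$, which is exactly the imposed density condition and yields a lower bound $\rho_2\int\rho|\partial_t\boldsymbol v|^2$. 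Integrating the elastic part $-\nabla\cdot(\mu(\nabla\boldsymbol v+\nabla\boldsymbol v^\top))-\nabla(\lambda\nabla\cdot\boldsymbol v)$ by parts produces precisely the Frobenius quadratic forms in the definition \eqref{nessary} of $\mathscr H$; its two defining inequalities are what make the interior term coercive, bounding it below by $c_1|\nabla\boldsymbol v|^2$ (with the first inequality controlling the energy density below by $c_0|\nabla\boldsymbol v|^2$). Altogether the interior term is bounded below by $C_0\int_0^T\big(\|\sqrt\rho\,\partial_t\boldsymbol v\|^2+\|\nabla\boldsymbol v\|^2\big)\,{\rm d}t$ with $C_0=\min\{\rho_2,c_1\}$.

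Next I would handle the boundary contributions. Because $E(t)$ is conserved along \eqref{dual}, i.e.\ $E(t)=E(0)$ for all $t$, the temporal boundary terms at $t=0,T$ are estimated by Cauchy--Schwarz and $\max|\nabla l|$ and absorbed into $2C_2E(0)$, while the integrated interior term gives a multiple of $(C_0T/C_3)E(0)$ once $C_3=\max\{1,2\mu_1+\max\{|\lambda_0|,|\lambda_1|\}\}$ is used to compare the energy density with $\|\sqrt\rho\,\partial_t\boldsymbol u\|^2+\|\nabla\boldsymbol u\|^2$. For the spatial boundary term I would invoke the Dirichlet condition $\boldsymbol u|_{\partial\Omega}=\boldsymbol 0$: on $\partial\Omega$ all tangential derivatives of $\boldsymbol u$ vanish, so $\nabla\boldsymbol u$ reduces to its normal part and the boundary quadratic form collapses to $(\nabla l\cdot\boldsymbol\nu)$ times the squared surface traction $|\mu\partial_{\boldsymbol\nu}\boldsymbol u+\mu\nabla\boldsymbol u^\top\cdot\boldsymbol\nu+\lambda(\nabla\cdot\boldsymbol u)\boldsymbol\nu|^2$. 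By the definition \eqref{gamma} of $\Gamma$, only the portion where $\nabla l\cdot\boldsymbol\nu>0$ contributes with the unfavorable sign, and it is dominated by $C_1\int_0^T\int_\Gamma|\mu\partial_{\boldsymbol\nu}\boldsymbol u+\mu\nabla\boldsymbol u^\top\cdot\boldsymbol\nu+\lambda(\nabla\cdot\boldsymbol u)\boldsymbol\nu|^2\,{\rm d}S\,{\rm d}t$.

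Finally, collecting the three families of terms in $\int_0^T\int_\Omega S_1\cdot S_2\le 0$, dividing by the common leading factor $\tau$ and letting $\tau\to0$ (so that $\boldsymbol v\to\boldsymbol u$ and every $\tau$-weighted remainder drops out), I obtain a Rellich-type relation of the form $(C_0T/C_3)E(0)-2C_2E(0)\le C_1\int_0^T\int_\Gamma|\cdots|^2\,{\rm d}S\,{\rm d}t$. Multiplying through by $C_3$ and invoking $T>2C_2C_3/C_0$, which ensures $TC_0-2C_2C_3>0$, gives the asserted estimate. I expect the main obstacle to be the vector-valued integration by parts for the Lam\'{e} operator against the Rellich multiplier: one must carefully track the tensor terms $\nabla\boldsymbol v+\nabla\boldsymbol v^\top$ and the Frobenius products so that the interior form matches \eqref{nessary} exactly, and one must verify that the spatial boundary form genuinely reduces to the traction on $\Gamma$ after $\boldsymbol u|_{\partial\Omega}=\boldsymbol 0$ is used. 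This reduction, together with the coercivity encoded in $\mathscr H$ and the density condition, is precisely where the specific choices of $l$ and $\mathscr C=d-1$ are essential.
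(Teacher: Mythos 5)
Your proposal follows essentially the same route as the paper's own proof: integrating the pointwise Carleman inequality of Lemma \ref{le:car1} with the multiplier $S_2$, splitting into an interior form made coercive by $\mathscr H$ and the density condition (with $\mathscr C=d-1$, $\Delta l=d$), temporal boundary terms controlled via energy conservation, a spatial boundary term reduced on $\Gamma$ through the Dirichlet condition, and finally letting $\tau\to0$ under $T>2C_2C_3/C_0$. The plan is correct and matches the paper's argument step for step.
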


\begin{proof}
Taking integral of \eqref{a} over $(0, T)\times\Omega$  with $\boldsymbol w=\boldsymbol u$, we get
\begin{align}\label{carlemanestimate}
&\int_0^T\int_\Omega S_1\cdot S_2 {\rm d}\boldsymbol x  {\rm d}t\nonumber\\
\leq &\int_0^T\int_\Omega\frac{e^{2\tau l}}{2}\big|\rho\partial_{t}^2\boldsymbol u-\nabla\cdot(\mu(\nabla\boldsymbol u+\nabla\boldsymbol u^\top))-\nabla(\lambda\nabla\cdot\boldsymbol u)\big|^2 {\rm d}\boldsymbol x {\rm d}t=0.
\end{align}
We rewrite the left-hand side of \eqref{carlemanestimate} as
\begin{align*}
\text{L.H.S.}=I_1+I_2+I_3.
\end{align*}
Integration by parts and $\partial_t \boldsymbol v|_{(0, T)\times \partial\Omega}=0$,  we obtain
\begin{align}\label{I_1}
I_1=&\tau\int_0^T\int_\Omega\rho\partial^2_t\boldsymbol v\cdot(2\nabla\boldsymbol v\cdot\nabla l+\mathscr C\boldsymbol v) {\rm d}\boldsymbol x {\rm d}t\nonumber\\
=&\tau\int_\Omega\rho\partial_t\boldsymbol v\cdot(2\nabla\boldsymbol v\cdot\nabla l+\mathscr C\boldsymbol v) {\rm d}\boldsymbol x\big|_0^T-\tau\int_0^T\int_\Omega\rho\partial_t\boldsymbol v\cdot\partial_t(2\nabla\boldsymbol v\cdot\nabla l+\mathscr C\boldsymbol v) {\rm d}\boldsymbol x {\rm d}t\nonumber\\
=&\tau\int_\Omega\rho\partial_t\boldsymbol v\cdot(2\nabla\boldsymbol v\cdot\nabla l+\mathscr C\boldsymbol v) {\rm d}\boldsymbol x\big|_0^T-\tau\int_0^T\int_\Omega\nabla\cdot(\rho|\partial_t\boldsymbol v|^2\nabla l) {\rm d}\boldsymbol x {\rm d}t\nonumber\\
&+\tau\int_0^T\int_\Omega(\nabla\rho\cdot\nabla l)|\partial_t\boldsymbol v|^2 {\rm d}\boldsymbol x {\rm d}t+\tau\int_0^T\int_\Omega\rho\Delta l|\partial_t\boldsymbol v|^2 {\rm d}\boldsymbol x {\rm d}t\nonumber\\
&-\tau\int_0^T\int_\Omega\mathscr C\rho|\partial_t \boldsymbol v|^2{\rm d}\boldsymbol x {\rm d}t\nonumber\\
=&\tau\int_\Omega\rho\partial_t\boldsymbol v\cdot(2\nabla\boldsymbol v\cdot\nabla l+\mathscr C\boldsymbol v) {\rm d}\boldsymbol x\big|_0^T\nonumber\\
&+\tau\int_0^T\int_\Omega(\rho\Delta l+\nabla\rho\cdot\nabla l-\mathscr C\rho)|\partial_t\boldsymbol v|^2 {\rm d}\boldsymbol x {\rm d}t,
\end{align}
and
\begin{align*}
I_2
=&-\tau\int_0^T\int_\Omega\big(\nabla\cdot(\mu (\nabla\boldsymbol v+\nabla \boldsymbol v^{\top}))+\nabla(\lambda  \nabla\cdot\boldsymbol v)\big)\cdot(2\nabla\boldsymbol v\cdot\nabla l+\mathscr C\boldsymbol v) {\rm d}\boldsymbol x {\rm d}t\nonumber\\
=&-\tau\int_0^T\int_{\partial\Omega}((\mu (\nabla\boldsymbol v+\nabla \boldsymbol v^{\top})+\lambda (\nabla\cdot\boldsymbol v )I_d)\cdot\boldsymbol \nu)\cdot(2\nabla\boldsymbol v\cdot\nabla l+\mathscr C\boldsymbol v) {\rm d}S {\rm d}t\nonumber\\
&+\tau\int_0^T\int_\Omega(\mu (\nabla\boldsymbol v+\nabla \boldsymbol v^{\top})+\lambda(\nabla\cdot\boldsymbol v) I_d):\nabla(2\nabla\boldsymbol v\cdot\nabla l+\mathscr C\boldsymbol v) {\rm d}\boldsymbol x {\rm d}t\nonumber\\
=&-2\tau\int_0^T\int_{\partial\Omega}((\mu (\nabla\boldsymbol v+\nabla \boldsymbol v^{\top})+\lambda(\nabla\cdot\boldsymbol v)I_d)\cdot\boldsymbol \nu)\cdot(\nabla\boldsymbol v\cdot\nabla l) {\rm d}S {\rm d}t\nonumber\\
&+2\tau\int_0^T\int_\Omega((\mu (\nabla\boldsymbol v+\nabla \boldsymbol v^{\top})+\lambda(\nabla\cdot\boldsymbol v)I_d)\cdot\nabla\boldsymbol v):\nabla\nabla l{\rm d}\boldsymbol x {\rm d}t\\
&+\tau\mathscr C\int_0^T\int_\Omega(\mu (\nabla\boldsymbol v+\nabla \boldsymbol v^{\top})+\lambda(\nabla\cdot\boldsymbol v)I_d):\nabla\boldsymbol v{\rm d}\boldsymbol x {\rm d}t+\tau M,
\end{align*}
where
\begin{align}\label{sec2}
M=&\int_0^T\int_\Omega\nabla\cdot\big(((\mu (\nabla\boldsymbol v+\nabla \boldsymbol v^{\top})+\lambda(\nabla\cdot\boldsymbol v)I_d):\nabla \boldsymbol v )\nabla l\big){\rm d}\boldsymbol x {\rm d}t \nonumber\\
&-\int_0^T\int_\Omega\Delta l((\mu (\nabla\boldsymbol v+\nabla \boldsymbol v^{\top})+\lambda(\nabla\cdot\boldsymbol v)I_d):\nabla\boldsymbol v){\rm d}\boldsymbol x {\rm d}t \nonumber\\
&-\int_0^T\int_\Omega\underset{i=1}{\overset{d}{\sum}}((\partial_{x_i}\mu(\nabla\boldsymbol v+\nabla\boldsymbol v^\top) +\partial_{x_i}\lambda (\nabla\cdot \boldsymbol v)I_d):\nabla \boldsymbol v)\partial_{x_i}l{\rm d}\boldsymbol x {\rm d}t \nonumber\\
=&\int_0^T\int_{\partial\Omega}(\nabla l\cdot\boldsymbol\nu)(\mu (\nabla\boldsymbol v+\nabla \boldsymbol v^{\top})+\lambda(\nabla\cdot\boldsymbol v)I_d):\nabla \boldsymbol v{\rm d}S{\rm d}t\nonumber\\
&-\int_0^T\int_\Omega\Delta l((\mu (\nabla\boldsymbol v+\nabla \boldsymbol v^{\top})+\lambda(\nabla\cdot\boldsymbol v)I_d):\nabla\boldsymbol v){\rm d}\boldsymbol x {\rm d}t\nonumber\\
&-\int_0^T\int_\Omega\underset{i=1}{\overset{d}{\sum}}((\partial_{x_i}\mu(\nabla\boldsymbol v+\nabla\boldsymbol v^\top) +\partial_{x_i}\lambda (\nabla\cdot \boldsymbol v)I_d):\nabla \boldsymbol v)\partial_{x_i}l{\rm d}\boldsymbol x {\rm d}t.
\end{align}
Substituting \eqref{sec2} into $I_2$, we obtain
\begin{align}\label{I_2}
I_2=&-2\tau\int_0^T\int_{\partial\Omega}((\mu (\nabla\boldsymbol v+\nabla \boldsymbol v^{\top})+\lambda(\nabla\cdot\boldsymbol v)I_d)\cdot\boldsymbol \nu)\cdot(\nabla\boldsymbol v\cdot\nabla l) {\rm d}S {\rm d}t\nonumber\\
&+2\tau\int_0^T\int_\Omega((\mu (\nabla\boldsymbol v+\nabla \boldsymbol v^{\top})+\lambda(\nabla\cdot\boldsymbol v)I_d)\cdot\nabla\boldsymbol v):\nabla\nabla l{\rm d}\boldsymbol x {\rm d}t\nonumber\\
&+\tau\mathscr C\int_0^T\int_\Omega(\mu (\nabla\boldsymbol v+\nabla \boldsymbol v^{\top})+\lambda(\nabla\cdot\boldsymbol v)I_d):\nabla\boldsymbol v{\rm d}\boldsymbol x {\rm d}t\nonumber\\
&+\tau\int_0^T\int_{\partial\Omega}(\nabla l\cdot\boldsymbol\nu)(\mu (\nabla\boldsymbol v+\nabla \boldsymbol v^{\top})+\lambda  (\nabla\cdot\boldsymbol v)I_d):\nabla \boldsymbol v{\rm d}S{\rm d}t\nonumber\\
&-\tau\int_0^T\int_\Omega\Delta l((\mu (\nabla\boldsymbol v+\nabla \boldsymbol v^{\top})+\lambda(\nabla\cdot\boldsymbol v)I_d):\nabla\boldsymbol v){\rm d}\boldsymbol x {\rm d}t\nonumber\\
&-\tau\int_0^T\int_\Omega\underset{i=1}{\overset{d}{\sum}}((\partial_{x_i}\mu(\nabla\boldsymbol v+\nabla\boldsymbol v^\top) +\partial_{x_i}\lambda (\nabla\cdot \boldsymbol v)I_d):\nabla \boldsymbol v)\partial_{x_i}l{\rm d}\boldsymbol x {\rm d}t.
\end{align}

Moreover, 
\begin{align}\label{I_3}
I_3
=\tau^2 \mathcal R_1+\tau^3\mathcal R_2,
\end{align}
where $\mathcal R_1$ and $\mathcal R_2$ are the second and third order terms of $\tau$, respectively.

Plugging \eqref{I_1}, \eqref{I_2}, \eqref{I_3} and $\boldsymbol v=e^{\tau l}\boldsymbol u$ into \eqref{carlemanestimate}, and using \eqref{nessary} , we have
\begin{align}\label{zongin}
&2\int_0^T\int_{\partial\Omega}e^{2\tau l}((\mu (\nabla\boldsymbol u+\nabla \boldsymbol u^{\top})+\lambda ( \nabla\cdot\boldsymbol u)I_d)\cdot\boldsymbol \nu)\cdot(\nabla\boldsymbol u\cdot\nabla l) {\rm d}S {\rm d}t\nonumber\\
&-\int_0^T\int_{\partial\Omega}e^{2\tau l}(\nabla l\cdot\boldsymbol\nu)(\mu (\nabla\boldsymbol u+\nabla \boldsymbol u^{\top})+\lambda(\nabla\cdot\boldsymbol u)I_d):\nabla \boldsymbol u{\rm d}S{\rm d}t\nonumber\\
&+\tau|\mathcal R_3|+\tau^2|\mathcal R_4|+\tau^3|\mathcal R_5|\nonumber\\
\geq&\int_\Omega e^{2\tau l}\rho\partial_t\boldsymbol u\cdot(2\nabla\boldsymbol u\cdot\nabla l+\mathscr C\boldsymbol u) {\rm d}\boldsymbol x\big|_0^T+\int_0^T\int_\Omega e^{2\tau l}(\rho+(\nabla \rho\cdot\nabla l))|\partial_t\boldsymbol u|^2 {\rm d}\boldsymbol x {\rm d}t\nonumber\\
&-d\int_0^T\int_\Omega e^{2\tau l}((\mu (\nabla\boldsymbol u+\nabla \boldsymbol u^{\top})+\lambda(\nabla\cdot\boldsymbol u)I_d):\nabla\boldsymbol u){\rm d}\boldsymbol x {\rm d}t\nonumber\\
&-\int_0^T\int_\Omega e^{2\tau l}\underset{i=1}{\overset{d}{\sum}}((\partial_{x_i}\mu(\nabla\boldsymbol u+\nabla\boldsymbol u^\top) +\partial_{x_i}\lambda(\nabla\cdot \boldsymbol u)I_d):\nabla \boldsymbol u)\partial_{x_i}l{\rm d}\boldsymbol x {\rm d}t\nonumber\\
&+2\int_0^T\int_\Omega e^{2\tau l}((\mu (\nabla\boldsymbol u+\nabla \boldsymbol u^{\top})+\lambda(\nabla\cdot\boldsymbol u) I_d):\nabla\boldsymbol u){\rm d}\boldsymbol x {\rm d}t\nonumber\\
&+(d-1)\int_0^T\int_\Omega e^{2\tau l}((\mu (\nabla\boldsymbol u+\nabla \boldsymbol u^{\top})+\lambda(\nabla\cdot\boldsymbol u)I_d):\nabla\boldsymbol u){\rm d}\boldsymbol x {\rm d}t\nonumber\\
=&\int_\Omega e^{2\tau l}\rho\partial_t\boldsymbol u\cdot(2\nabla\boldsymbol u\cdot\nabla l+\mathscr C\boldsymbol u) {\rm d}\boldsymbol x\big|_0^T+\int_0^T\int_\Omega e^{2\tau l}(\rho+(\nabla \rho\cdot\nabla l))|\partial_t\boldsymbol u|^2 {\rm d}\boldsymbol x {\rm d}t\nonumber\\
&+\int_0^T\int_\Omega e^{2\tau l}((\mu (\nabla\boldsymbol u+\nabla \boldsymbol u^{\top})+\lambda (\nabla\cdot\boldsymbol u) I_d):\nabla\boldsymbol u){\rm d}\boldsymbol x {\rm d}t\nonumber\\
&-\int_0^T\int_\Omega e^{2\tau l}\underset{i=1}{\overset{d}{\sum}}((\partial_{x_i}\mu(\nabla\boldsymbol u+\nabla\boldsymbol u^\top) +\partial_{x_i}\lambda(\nabla\cdot \boldsymbol u)I_d):\nabla \boldsymbol u)\partial_{x_i}l{\rm d}\boldsymbol x {\rm d}t,
\end{align}
where $\mathcal R_3, \mathcal R_4$ and $\mathcal R_5$ are the first, second and third order terms of $\tau$.
The right-hand side of  \eqref{zongin} is not less then
\begin{align}\label{zong'}	
\int_\Omega e^{2\tau l}&\rho\partial_t\boldsymbol u\cdot(2\nabla\boldsymbol u\cdot\nabla l+\mathscr C\boldsymbol u) {\rm d}\boldsymbol x\big|_0^T\nonumber\\
&+\min\{\rho_2, c_1\}\int_0^T\int_\Omega e^{2\tau l}(|\sqrt\rho\partial_t\boldsymbol u|^2+|\nabla \boldsymbol u|^2) {\rm d}\boldsymbol x {\rm d}t.
\end{align}

Notice that  $\boldsymbol v=\partial_t\boldsymbol v=0$ on $(0, T)\times\partial\Omega$. Since $\boldsymbol u$ vanishes there, one has
\begin{align*}
\nabla \boldsymbol v=&\tau\boldsymbol v\cdot \nabla l^\top+e^{\tau l}\nabla\boldsymbol u=e^{\tau l}\nabla\boldsymbol u=e^{\tau l}(\nabla\boldsymbol u\cdot\boldsymbol \nu)\cdot\boldsymbol \nu^\top.
\end{align*}
Moreover, it is clear that
\begin{align}\label{bouequ}
	&((\mu (\nabla\boldsymbol u+\nabla \boldsymbol u^{\top})+\lambda  (\nabla\cdot\boldsymbol u)I_d)\cdot\boldsymbol \nu)\cdot(\nabla\boldsymbol u\cdot\boldsymbol \nu) \nonumber\\
	&\hspace{1cm}=(\mu (\nabla\boldsymbol u+\nabla \boldsymbol u^{\top})+\lambda ( \nabla\cdot\boldsymbol u)I_d):\nabla \boldsymbol u\geq c_0|\nabla\boldsymbol u|^2> 0~\quad \text{on}~\partial\Omega.
\end{align}
Then it follows from \eqref{gamma} and \eqref{bouequ} that the first and second term of left-hand side of \eqref{zongin} are equivalent to
\begin{align}\label{left}
&2\int_0^T\int_{\partial\Omega}e^{2\tau l}((\mu (\nabla\boldsymbol u+\nabla \boldsymbol u^{\top})+\lambda  (\nabla\cdot\boldsymbol u)I_d)\cdot\boldsymbol \nu)\cdot(\nabla\boldsymbol u\cdot\nabla l) {\rm d}S {\rm d}t\nonumber\\
&-\int_0^T\int_{\partial\Omega}e^{2\tau l}(\nabla l\cdot\boldsymbol\nu)(\mu (\nabla\boldsymbol u+\nabla \boldsymbol u^{\top})+\lambda  (\nabla\cdot\boldsymbol u)I_d):\nabla \boldsymbol u{\rm d}S{\rm d}t\nonumber\\
=&\int_0^T\int_{\partial\Omega}e^{2\tau l}(\nabla l\cdot\boldsymbol\nu)(\mu (\nabla\boldsymbol u+\nabla \boldsymbol u^{\top})+\lambda(\nabla\cdot\boldsymbol u)I_d):\nabla \boldsymbol u{\rm d}S{\rm d}t\nonumber\\
=&\int_0^T\int_{\partial\Omega}(\nabla l\cdot\boldsymbol\nu)e^{2\tau l}((\mu (\nabla\boldsymbol u+\nabla \boldsymbol u^{\top})+\lambda (\nabla\cdot\boldsymbol u)I_d)\cdot\boldsymbol \nu)\cdot\partial_\nu\boldsymbol u {\rm d}S {\rm d}t\nonumber\\
\leq&\int_0^T\int_{\Gamma}(\nabla l\cdot\boldsymbol\nu)e^{2\tau l}(\mu |\partial_{\boldsymbol \nu}\boldsymbol u|^2+\mu(\nabla \boldsymbol u^{\top}\cdot\boldsymbol \nu)\cdot\partial_\nu\boldsymbol u+\lambda  (\nabla\cdot\boldsymbol u)\boldsymbol \nu\cdot\partial_\nu\boldsymbol u) {\rm d}S {\rm d}t\nonumber\\
&+\int_0^T\int_{\Gamma}(\nabla l\cdot\boldsymbol\nu)e^{2\tau l}\big(\frac{\mu}{4}|\nabla\boldsymbol u^\top\cdot\boldsymbol \nu|^2+\frac{\mu}{4}|\partial_\nu\boldsymbol u|^2+\lambda^2|(\nabla\cdot\boldsymbol u)\boldsymbol\nu|^2\nonumber\\
&\qquad\qquad+(\frac{1}{2}(\nabla\boldsymbol u^\top\cdot\boldsymbol\nu)+\lambda(\nabla\cdot\boldsymbol u)\boldsymbol\nu)^2\big){\rm d}S {\rm d}t\nonumber\\
\leq &\max\big\{\sqrt 2, \sqrt{1+\mu_0}/(2\mu_0),  \sqrt{5/(4\mu_0)}\big\}\nonumber\\
&\cdot\max\{|\nabla l|\}\int_0^T\int_{\Gamma}e^{2\tau l}(|\mu \partial_{\boldsymbol\nu}\boldsymbol u+\mu\nabla \boldsymbol u^{\top}\cdot\boldsymbol \nu+\lambda  (\nabla\cdot\boldsymbol u)\boldsymbol \nu|^2 {\rm d}S {\rm d}t.
\end{align}
Furthermore, observe that
\begin{align*}
&\int_0^T\int_\Omega |2\nabla\boldsymbol u\cdot\nabla l+(d-1)\boldsymbol u|^2-|2\nabla\boldsymbol u\cdot\nabla l|^2{\rm d}\boldsymbol x{\rm d}t\nonumber\\
=&\int_0^T\int_\Omega 4(d-1)(\nabla\boldsymbol u\cdot\nabla l)\cdot\boldsymbol u+(d-1)^2|\boldsymbol u|^2{\rm d}\boldsymbol x{\rm d}t\nonumber\\
=&\int_0^T\int_\Omega 2(d-1)(\nabla|\boldsymbol u|^2\cdot\nabla l)+(d-1)^2|\boldsymbol u|^2{\rm d}\boldsymbol x{\rm d}t\nonumber\\
=&\int_0^T\int_\Omega 2(d-1)\nabla\cdot(|\boldsymbol u|^2\cdot\nabla l)-2(d-1)\Delta l|\boldsymbol u|^2+(d-1)^2|\boldsymbol u|^2{\rm d}\boldsymbol x{\rm d}t\nonumber\\
=&\int_0^T\int_\Omega -2d(d-1)|\boldsymbol u|^2+(d-1)^2|\boldsymbol u|^2{\rm d}\boldsymbol x{\rm d}t\nonumber\\
=&\int_0^T\int_\Omega (1-d^2)|\boldsymbol u|^2{\rm d}\boldsymbol x{\rm d}t\leq 0,
\end{align*}
which  leads to
\begin{align}\label{r1}
&\big| \int_\Omega e^{2\tau l}\rho\partial_t\boldsymbol u\cdot(2\nabla\boldsymbol u\cdot\nabla l+\mathscr C\boldsymbol u\big) {\rm d}\boldsymbol x\big|\nonumber\\
= &\big|\int_\Omega e^{2\tau l}\rho\partial_t\boldsymbol u\cdot(2\nabla\boldsymbol u\cdot\nabla l+(d-1)\boldsymbol u\big) {\rm d}\boldsymbol x\big|\nonumber\\
 \leq & \int_\Omega e^{2\tau l}(|\rho\partial_t\boldsymbol u|^2/2+|\nabla \boldsymbol u\cdot\nabla l|^2){\rm d}\boldsymbol x\nonumber\\
 \leq & \int_\Omega e^{2\tau l}(\rho_1/2|\sqrt\rho\partial_t\boldsymbol u|^2+2c_0^{-1}c_0|\nabla l|^2|\nabla \boldsymbol u|^2){\rm d}\boldsymbol x\nonumber\\
 \leq &\max\big\{\rho_1/2, 2c_0^{-1}\max\{|\nabla l|^2\}\big\}\int_\Omega e^{2\tau l}(|\sqrt\rho\partial_t\boldsymbol u|^2+c_0|\nabla \boldsymbol u|^2){\rm d}\boldsymbol x\nonumber\\
 \leq & \max\big\{\rho_1/2, 2c_0^{-1}\max\{|\nabla l|^2\}\big\}e^{2\tau \max\{l\}}E(t).
\end{align}

In addition,  we have
\begin{align}\label{r2}
 e^{2\tau \min \{l\}}E(t)\leq &\int_\Omega e^{2\tau l}(|\sqrt \rho\partial_t\boldsymbol u|^2+(2\mu_1+\max\{|\lambda_0|, |\lambda_1|\})|\nabla\boldsymbol u|^2){\rm d}\boldsymbol x\nonumber\\
\leq &\max\big\{1, 2\mu_1+\max\{|\lambda_0|, |\lambda_1|\}\big\}\int_\Omega e^{2\tau l}(|\sqrt \rho\partial_t\boldsymbol u|^2+|\nabla\boldsymbol u|^2){\rm d}\boldsymbol x.
\end{align}
Substituting  \eqref{zong'}, \eqref{left}--\eqref{r2} into \eqref{zongin} yields that
\begin{align*}
	&\bigg(\frac{T\min\{\rho_2, c_1\}}{\max\{1, 2\mu_1+\max\{|\lambda_0|, |\lambda_1|\}\}}-2\max\big\{\rho_1/2, 2c_0^{-1}\max\{|\nabla l|^2\}\big\}\bigg)E(0)\\
	\leq &\max\big\{\sqrt 2, \sqrt{1+\mu_0}/(2\mu_0),  \sqrt{5/(4\mu_0)}\big\}\\
&\cdot\max\{|\nabla l|\}\int_0^T\int_{\Gamma}(|\mu \partial_{\boldsymbol\nu}\boldsymbol u+\mu\nabla \boldsymbol u^{\top}\cdot\boldsymbol \nu+\lambda  (\nabla\cdot\boldsymbol u)\boldsymbol \nu|^2 {\rm d}S {\rm d}t
\end{align*}
as $\tau$ tends to 0.

The proof is complete.
\end{proof}

Based on Theorem \ref{obineq}, we further give the following fact.
\begin{theo}
Suppose that $l, \Gamma, T, \rho, \mu,\lambda$ satisfy the conditions of Theorem \ref{obineq}.
Let $U$ be a bounded $\mathcal C^2$ neighborhood of $\rho$.
Then there is a $\mathcal{C}^1$ neighborhood $V$ of $\rho$ and some constant $C>0$ satisfying the following: for all $\tilde\rho\in U$ such that $\tilde\rho\in V$, the solutions
\begin{align*}
\tilde{\boldsymbol u}\in \mathcal{C}([0, T]; \boldsymbol H^{1}(\Omega))\cap\mathcal{C}^1([0, T];\boldsymbol L^{2}(\Omega))
\end{align*}
of the problem
\begin{align*}
\begin{cases}
\partial_t^2\tilde{\boldsymbol u}(t,\boldsymbol x)-\mathcal L_{\tilde{\rho}}\tilde{\boldsymbol u}(t,\boldsymbol x)=\boldsymbol 0 \quad &\text{in}~(0, T)\times\Omega,\\
\tilde{\boldsymbol u}(t,\boldsymbol x)=\boldsymbol 0,\quad &\text{on}~(0, T)\times\partial\Omega
\end{cases}
\end{align*}
satisfies the observability inequality
\begin{align*}
\|\partial_t\tilde{\boldsymbol u}(0)&\|_{\boldsymbol L^2(\Omega)}+\|\tilde{\boldsymbol u}(0)\|_{\boldsymbol H_0^1(\Omega)}
\leq C\|\mu\partial_{\boldsymbol \nu}\tilde{\boldsymbol u}+\mu\nabla\tilde{\boldsymbol u}^\top\cdot\boldsymbol \nu+\lambda(\nabla\cdot\tilde{\boldsymbol u})\boldsymbol \nu\|_{\boldsymbol L^2((0, T)\times\Gamma)}.
\end{align*}
\end{theo}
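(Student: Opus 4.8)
The plan is to show that the observability inequality of Theorem~\ref{obineq} is \emph{stable} under small perturbations of the density by tracking how the constants produced by the Carleman estimate depend on $\rho$. The decisive observation is that, with $\mu$ and $\lambda$ held fixed in $\mathscr H$, the density enters the constants $C_0,\dots,C_3$ of Theorem~\ref{obineq} only through two scalar quantities: the upper bound $\rho_1$ appearing in $C_2=\max\{\rho_1/2,\,2c_0^{-1}\max\{|\nabla l|^2\}\}$, and the lower bound $\rho_2$ in the condition $1+(\nabla\rho\cdot\nabla l)/\rho>\rho_2>0$, which enters $C_0=\min\{\rho_2,c_1\}$. Both $C_1$ and $C_3$ are independent of $\rho$. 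Thus it suffices to verify that these two quantities vary continuously as $\rho$ is perturbed, and to propagate this through the explicit observability constant $C_1C_3/(TC_0-2C_2C_3)$.

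First I would fix the $\mathcal C^1$ neighborhood $V$. Since $\overline\Omega$ is compact and $\rho$ is strictly positive, a $\mathcal C^0$-small perturbation keeps $\tilde\rho$ between fixed positive bounds $\tilde\rho_0\le\tilde\rho\le\tilde\rho_1$, with $\tilde\rho_0>0$ and $\tilde\rho_1$ close to $\rho_1$. Here the $\mathcal C^1$ (rather than merely $\mathcal C^0$) topology is essential: because the admissibility condition involves $\nabla\tilde\rho$, the $\mathcal C^1$-closeness of $\tilde\rho$ to $\rho$ guarantees that $1+(\nabla\tilde\rho\cdot\nabla l)/\tilde\rho$ stays above a uniform constant $\rho_2'>0$, slightly smaller than $\rho_2$. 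Consequently the perturbed constants satisfy $\tilde C_0\ge\min\{\rho_2',c_1\}$, $\tilde C_2\le\max\{\tilde\rho_1/2,\,2c_0^{-1}\max\{|\nabla l|^2\}\}$, $\tilde C_1=C_1$ and $\tilde C_3=C_3$, all within a prescribed range of the reference values. Since the hypothesis $T>2C_2C_3/C_0$ is a \emph{strict} inequality, shrinking $V$ if necessary keeps $T>2\tilde C_2\tilde C_3/\tilde C_0$ for every $\tilde\rho\in V$, and bounds the observability constant $\tilde C_1\tilde C_3/(T\tilde C_0-2\tilde C_2\tilde C_3)$ uniformly by a single constant $C$.

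With $V$ so chosen, I would apply Theorem~\ref{obineq} verbatim to $\mathcal L_{\tilde\rho}$ (its proof uses only $\mu,\lambda\in\mathscr H$ and the two density bounds just secured), obtaining $\tilde E(0)\le C\int_0^T\int_\Gamma|\mu\partial_{\boldsymbol\nu}\tilde{\boldsymbol u}+\mu\nabla\tilde{\boldsymbol u}^\top\cdot\boldsymbol\nu+\lambda(\nabla\cdot\tilde{\boldsymbol u})\boldsymbol\nu|^2\,{\rm d}S\,{\rm d}t$, where $\tilde E$ is the energy \eqref{energy1} built from $\tilde\rho$. It remains to convert $\tilde E(0)$ into the stated norms. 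Coercivity from $\mu,\lambda\in\mathscr H$ gives $(\mu(\nabla\tilde{\boldsymbol u}+\nabla\tilde{\boldsymbol u}^\top)+\lambda(\nabla\cdot\tilde{\boldsymbol u})I_d):\nabla\tilde{\boldsymbol u}\ge c_0|\nabla\tilde{\boldsymbol u}|^2$, so together with $\tilde\rho\ge\tilde\rho_0$ one has $\tilde E(0)\ge\min\{\tilde\rho_0,c_0\}(\|\partial_t\tilde{\boldsymbol u}(0)\|_{\boldsymbol L^2(\Omega)}^2+\|\tilde{\boldsymbol u}(0)\|_{\boldsymbol H_0^1(\Omega)}^2)$; the uniform lower bound $\tilde\rho_0$ then yields the claim after taking square roots and absorbing constants.

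The main obstacle is not the constant bookkeeping but the regularity mismatch: Theorem~\ref{obineq} is proved for classical solutions $\tilde{\boldsymbol u}\in\boldsymbol{\mathcal C}^2([0,T]\times\Omega)$, whereas the statement only assumes the finite-energy class $\mathcal C([0,T];\boldsymbol H^1(\Omega))\cap\mathcal C^1([0,T];\boldsymbol L^2(\Omega))$. I would close this gap by density: approximate the final data of $\tilde{\boldsymbol u}$ by smooth data whose solutions are $\boldsymbol{\mathcal C}^2$, apply the uniform inequality to the approximants, and pass to the limit using continuous dependence of the solution on its data together with the continuity of the traction map $\Lambda_{\tilde\rho,T}$ established via \cite{Lasi1986}. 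The role of the bounded $\mathcal C^2$ neighborhood $U$ is precisely to furnish uniform well-posedness and regularity constants for this approximation, so that the limiting inequality holds with the same $C$ for all admissible $\tilde\rho\in U\cap V$.
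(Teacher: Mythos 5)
Your proposal is correct and follows essentially the same route as the paper, which disposes of this theorem in a brief remark observing that one need only check that the hypothesis $1+(\nabla\tilde\rho\cdot\nabla l)/\tilde\rho>\rho_2$, the boundary set \eqref{gamma}, and the constants $C_i$, $i=0,1,2,3$ of Theorem \ref{obineq} persist for $\tilde\rho$ once $V$ is chosen small enough. Your write-up is in fact more thorough than the paper's: the explicit tracking of which constants depend on the density, the coercivity step converting $\tilde E(0)$ into the stated norms, and the density argument bridging the gap between $\boldsymbol{\mathcal C}^2$ solutions and the finite-energy class are all left implicit there.
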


\begin{rema}
We only need to prove that the assumptions $1+(\nabla \tilde\rho\cdot\nabla l)/\tilde\rho>\rho_2$ , the boundary restriction \eqref{gamma} and the constants $C_i, i=0, 1, 2, 3$ also still hold for $\tilde\rho$.
In fact, if we just choose $V$ smaller enough, then the result is obvious.
\end{rema}
\section{Lipschitz stability for the medium}\label{5}
In this section, we will use the reconstruction formula to prove the locally Lipschitz stable estimate and the logarithmic estimate for low and high frequencies, respectively.
\subsection{A estimate for the control operator}
The following lemma gives the estimate for the control operator.
\begin{lemm}
Let $\mathcal W_{\rho, T}: \boldsymbol L^2((0, T)\times \partial\Omega)\rightarrow \boldsymbol  L^2(\Omega)$ be the control map given by \eqref{ctm}.
Then its adjoint operator
\begin{align*}
\mathcal W_{\rho, T}^* \boldsymbol \phi&=\mu\partial_{\boldsymbol \nu}\boldsymbol p+\mu\nabla \boldsymbol p^\top\cdot\boldsymbol \nu+\lambda(\nabla\cdot\boldsymbol p)\boldsymbol \nu\big|_{((0, T)\times\partial\Omega)},
\end{align*}
where $\boldsymbol p$ is the solution of
\begin{align}\label{dual sys}
\begin{cases}
\partial_t^2\boldsymbol p(t,\boldsymbol x)-\mathcal L_\rho\boldsymbol p(t,\boldsymbol x)=\boldsymbol 0 \quad &\text{in}~ (0, T)\times\Omega,\\
\boldsymbol p(T, \boldsymbol x)=\boldsymbol 0, \quad \partial_t\boldsymbol p (T, \boldsymbol x)=\boldsymbol \phi ~\quad &\text{in}~  \Omega,\\
\boldsymbol p(t, \boldsymbol x)=\boldsymbol 0  \quad &\text{on}~  (0, T)\times\partial\Omega,
\end{cases}
\end{align}
satisfies
\begin{align}\label{conj}
\|(\mathcal W_{\rho, T}^*)^{-1}\|_{R(\mathcal W_{\rho, T}^*)\rightarrow \boldsymbol L^2(\Omega)}\leq C_{obs}.
\end{align}
\end{lemm}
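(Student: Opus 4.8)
The plan is to treat the lemma in two parts: first derive the closed form for the adjoint $\mathcal W_{\rho,T}^*$ through a Lagrange/Green identity, then deduce the bound \eqref{conj} by applying the observability inequality \eqref{obs} to the backward solution $\boldsymbol p$.

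\emph{Identifying the adjoint.} Let $\boldsymbol u_f$ solve the forward problem \eqref{ab} with zero initial data and lateral data $\boldsymbol f$, so that $\mathcal W_{\rho,T}\boldsymbol f=\boldsymbol u_f(T)$, and let $\boldsymbol p$ solve the backward problem \eqref{dual sys}. Both satisfy $\partial_t^2\boldsymbol\cdot-\mathcal L_\rho\boldsymbol\cdot=\boldsymbol 0$, so that
\begin{align*}
0=\int_0^T\int_\Omega\Big(\boldsymbol p\cdot(\rho\partial_t^2\boldsymbol u_f-\rho\mathcal L_\rho\boldsymbol u_f)-\boldsymbol u_f\cdot(\rho\partial_t^2\boldsymbol p-\rho\mathcal L_\rho\boldsymbol p)\Big)\,{\rm d}\boldsymbol x\,{\rm d}t.
\end{align*}
I would integrate by parts twice in $t$ on the terms carrying $\partial_t^2$ and apply the second Green identity for the elastic operator (the same symmetric integration by parts already performed in \eqref{differeq} and in Lemma \ref{le:surjective}) to the terms carrying $\rho\mathcal L_\rho$. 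The temporal boundary contributions at $t=0$ vanish because $\boldsymbol u_f(0)=\partial_t\boldsymbol u_f(0)=\boldsymbol 0$, while at $t=T$ the conditions $\boldsymbol p(T)=\boldsymbol 0$ and $\partial_t\boldsymbol p(T)=\boldsymbol\phi$ leave only $-(\boldsymbol u_f(T),\boldsymbol\phi)_{\boldsymbol L^2(\Omega;\rho{\rm d}\boldsymbol x)}$. The spatial Green identity produces a traction pairing on $(0,T)\times\partial\Omega$; since $\boldsymbol p\equiv\boldsymbol 0$ there, the only surviving term is the one tested against $\boldsymbol u_f=\boldsymbol f$, namely $\langle\boldsymbol f,\,\mu\partial_{\boldsymbol\nu}\boldsymbol p+\mu\nabla\boldsymbol p^\top\cdot\boldsymbol\nu+\lambda(\nabla\cdot\boldsymbol p)\boldsymbol\nu\rangle$. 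Equating the two contributions gives
\begin{align*}
(\mathcal W_{\rho,T}\boldsymbol f,\boldsymbol\phi)_{\boldsymbol L^2(\Omega;\rho{\rm d}\boldsymbol x)}=\langle\boldsymbol f,\,\mu\partial_{\boldsymbol\nu}\boldsymbol p+\mu\nabla\boldsymbol p^\top\cdot\boldsymbol\nu+\lambda(\nabla\cdot\boldsymbol p)\boldsymbol\nu\rangle_{\boldsymbol L^2((0,T)\times\partial\Omega)},
\end{align*}
which is precisely the asserted formula for $\mathcal W_{\rho,T}^*\boldsymbol\phi$ (the weighted inner product on the codomain being the one consistent with $\mathcal J(\Lambda_{\rho,2T})=\mathcal W_{\rho,T}^*\mathcal W_{\rho,T}$ in Lemma \ref{blag}).

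\emph{The norm bound.} The solution $\boldsymbol p$ of \eqref{dual sys} is a solution of the homogeneous Dirichlet system \eqref{dual}, hence by energy conservation $E(t)\equiv E(0)$ from \eqref{energy1} and time reversibility the observability inequality \eqref{obs} governs $\boldsymbol p$. Using $\boldsymbol p(T)=\boldsymbol 0$ (so that $\|\boldsymbol p(T)\|_{\boldsymbol H_0^1(\Omega)}=0$) and $\partial_t\boldsymbol p(T)=\boldsymbol\phi$, together with $\Gamma\subset\partial\Omega$, yields
\begin{align*}
\|\boldsymbol\phi\|_{\boldsymbol L^2(\Omega)}\le C_{obs}\,\|\mu\partial_{\boldsymbol\nu}\boldsymbol p+\mu\nabla\boldsymbol p^\top\cdot\boldsymbol\nu+\lambda(\nabla\cdot\boldsymbol p)\boldsymbol\nu\|_{\boldsymbol L^2((0,T)\times\partial\Omega)}=C_{obs}\,\|\mathcal W_{\rho,T}^*\boldsymbol\phi\|.
\end{align*}
In particular $\mathcal W_{\rho,T}^*\boldsymbol\phi=\boldsymbol 0$ forces $\boldsymbol\phi=\boldsymbol 0$, so $\mathcal W_{\rho,T}^*$ is injective and $(\mathcal W_{\rho,T}^*)^{-1}$ is well defined on $R(\mathcal W_{\rho,T}^*)$; the displayed estimate is exactly \eqref{conj}.

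\emph{Main obstacle.} The only delicate point is the Green-identity bookkeeping for the elastic operator: one must check that the symmetric second Green identity holds with the traction $T_{\boldsymbol\nu}$ as the correct co-normal boundary operator, and track signs so that the surviving lateral term is tested against $\boldsymbol f$ with the right orientation. As this is the identical computation underlying \eqref{differeq} and Lemma \ref{le:surjective}, it is structurally routine, and the remaining ingredients—time reversibility and the constancy of the energy \eqref{energy1} ensuring \eqref{obs} applies to $\boldsymbol p$—are already in hand.
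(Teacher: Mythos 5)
Your proposal is correct and follows essentially the same route as the paper: the same Lagrange/Green identity between the forward solution $\boldsymbol u_f$ and the backward solution $\boldsymbol p$ identifies the adjoint, and the observability inequality \eqref{obs} applied to $\boldsymbol p$ (with $\boldsymbol p(T)=\boldsymbol 0$, $\partial_t\boldsymbol p(T)=\boldsymbol\phi$) gives \eqref{conj}. The only cosmetic difference is that you deduce injectivity of $\mathcal W_{\rho,T}^*$ directly from the estimate, whereas the paper deduces it from surjectivity of $\mathcal W_{\rho,T}$ via $\ker(\mathcal W_{\rho,T}^*)=R(\mathcal W_{\rho,T})^{\bot}=\{0\}$; both are valid.
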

\begin{proof}
From the initial-boundary value problem \eqref{ab} and \eqref{dual sys}, we calculate that
\begin{align*}
0=&\int_0^T(\partial_t^2\boldsymbol u_f-\mathcal L_\rho\boldsymbol u_f,  \boldsymbol p)_{\boldsymbol L^2(\Omega;  \rho {\rm d}\boldsymbol x)}-(\boldsymbol u_f,  \partial_t^2\boldsymbol p-\mathcal L_\rho \boldsymbol p)_{\boldsymbol L^2(\Omega;  \rho {\rm d}\boldsymbol x)} {\rm d}t\\
=&\int_\Omega\partial_t\boldsymbol u_f\cdot\boldsymbol p-\boldsymbol u_f\cdot\partial_t\boldsymbol p\big|^T_0\rho  {\rm d}\boldsymbol x\\
&-\int^T_0\int_{\partial\Omega}\mu(\partial_{\boldsymbol \nu }\boldsymbol u_f+\nabla\boldsymbol u_f^\top\cdot\boldsymbol \nu)\cdot \boldsymbol p-\boldsymbol u_f\cdot\mu (\partial_{\boldsymbol \nu }\boldsymbol p+\nabla\boldsymbol p^\top\cdot\boldsymbol \nu) {\rm d}S {\rm d}t\\
&-\int^T_0\int_{\partial\Omega}\lambda(\nabla\cdot \boldsymbol u_f)\boldsymbol \nu\cdot\boldsymbol p-\boldsymbol u_f\cdot\lambda(\nabla\cdot \boldsymbol p)\boldsymbol \nu {\rm d}S {\rm d}t\\
=&-(\boldsymbol u_f(T), \partial_t\boldsymbol p(T))_{\boldsymbol L^2(\Omega;  \rho {\rm d}\boldsymbol x)}+\langle\boldsymbol f,  \mu\partial_{\boldsymbol \nu} \boldsymbol p+\mu\nabla\boldsymbol p^\top\cdot\boldsymbol \nu+\lambda(\nabla\cdot \boldsymbol p)\boldsymbol \nu\rangle_{\boldsymbol L^2((0, T)\times\partial\Omega)}.
\end{align*}
Vanishing initial-boundary conditions yield that
\begin{align*}
\langle\boldsymbol f, \mu\partial_{\boldsymbol \nu} \boldsymbol p+\mu\nabla\boldsymbol p^\top\cdot\boldsymbol \nu+\lambda(\nabla\cdot \boldsymbol p)\boldsymbol \nu\rangle_{\boldsymbol L^2((0, T)\times\partial\Omega)}&=(\boldsymbol u_f(T),  \boldsymbol \phi)_{\boldsymbol L^2(\Omega;  \rho{\rm d}\boldsymbol x)}\\
&=(\mathcal W_{\rho, T}\boldsymbol f,  \boldsymbol \phi)_{\boldsymbol L^2(\Omega;  \rho {\rm d}\boldsymbol x)}\\
&=\langle\boldsymbol f,  \mathcal W_{\rho, T}^*\boldsymbol \phi\rangle_{\boldsymbol L^2((0, T)\times\partial\Omega)}.
\end{align*}
That is,
\begin{align*}
	\boldsymbol \phi=(\mathcal W_{\rho, T}^*)^{-1}(\mu\partial_{\boldsymbol \nu} \boldsymbol p+\mu\nabla\boldsymbol p^\top\cdot\boldsymbol \nu+\lambda(\nabla\cdot \boldsymbol p)\boldsymbol \nu).
\end{align*}
It follows from \eqref{obs} that
\begin{align*}
\|\boldsymbol \phi\|_{\boldsymbol L^2(\Omega)}\leq&C_{obs}\|\mu\partial_{\boldsymbol \nu}\boldsymbol p+\mu\nabla\boldsymbol p^\top\cdot\boldsymbol \nu+\lambda(\nabla\cdot\boldsymbol p)\boldsymbol \nu\|_{\boldsymbol L^2((0, T)\times\partial\Omega)}\\
=&C_{obs}\|\mathcal W_{\rho, T}^*\boldsymbol \phi\|_{\boldsymbol L^2((0, T)\times\partial\Omega)}.
\end{align*}
Observe that $\mathcal W_{\rho, T}$ is surjective, which leads to $\ker(\mathcal W_{\rho, T}^*)=R(\mathcal W_{\rho, T})^{\bot}=\{0\}$. Thus  $\mathcal W_{\rho, T}^*$ is injective
 and $(\mathcal W_{\rho, T}^*)^{-1}:R(\mathcal W_{\rho, T}^*)\rightarrow \boldsymbol L^2(\Omega)$ with
\begin{align*}
\|(\mathcal W_{\rho, T}^*)^{-1}\|_{R(\mathcal W_{\rho, T}^*)\rightarrow \boldsymbol L^2(\Omega)}\leq C_{obs}.
\end{align*}
This ends the proof of the lemma.
\end{proof}

By using \eqref{conj} and
\begin{align*}
(\mathcal W_{\rho, T}^\dag)^{*}=(\mathcal W_{\rho, T}^*)^{\dag}=(\mathcal W_{\rho, T}^*)^{-1}P_{R(\mathcal W_{\rho, T}^*)},
\end{align*}
we obtain
\begin{align*}
 \|\mathcal W_{\rho, T}^\dag\|_{\boldsymbol L^2(\Omega)\rightarrow \boldsymbol L^2((0, T)\times\partial\Omega)}=\|(\mathcal W_{\rho, T}^*)^\dag\|_{\boldsymbol L^2((0, T)\times\partial\Omega)\rightarrow \boldsymbol L^2(\Omega)}\leq C_{obs},
\end{align*}
where $P_{R(\mathcal W_{\rho, T}^*)}$ is the orthogonal projection onto $R(\mathcal W_{\rho, T}^*)$.

Consequently, given by  $(\mathcal \mathcal W_{\rho, T}^*\mathcal W_{\rho, T})^\dag=\mathcal W_{\rho, T}^\dag(\mathcal W_{\rho, T}^*)^\dag$, it holds that
\begin{align*}
 \|\mathcal J^\dag(\Lambda_{\rho, 2T})\|_{\boldsymbol L^2((0, T)\times\partial\Omega)\rightarrow \boldsymbol L^2((0, T)\times\partial\Omega)}=&\|(\mathcal W_{\rho, T}^*\mathcal W_{\rho, T})^\dag\|_{\boldsymbol L^2((0, T)\times\partial\Omega)\rightarrow \boldsymbol L^2((0, T)\times\partial\Omega)}\\
 \leq& C^2_{obs}.
\end{align*}

\begin{lemm}
 Two operators $\mathcal J(\Lambda_{\rho, 2T})$ and $\tilde{\mathcal J}(\tilde{\Lambda}_{\tilde{\rho}, 2T})$ satisfy
 \begin{align}\label{im}
  &\|\tilde{\mathcal J}^\dag(\tilde{\Lambda}_{\tilde{\rho}, 2T})-\mathcal J^\dag(\Lambda_{\rho, 2T})\|_{\boldsymbol L^2((0, T)\times\partial\Omega)\rightarrow \boldsymbol L^2((0, T)\times\partial\Omega)}\nonumber\\
  &\hspace{0.5cm}\leq 3C_{obs}^4\|\tilde{\mathcal J}(\tilde {\Lambda}_{\tilde{\rho}, 2T})-\mathcal J(\Lambda_{\rho, 2T})\|_{\boldsymbol L^2((0, T)\times\partial\Omega)\rightarrow \boldsymbol L^2((0, T)\times\partial\Omega)}.
   \end{align}
\end{lemm}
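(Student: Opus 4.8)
The plan is to read \eqref{im} as a perturbation bound for the Moore--Penrose pseudo-inverse of two self-adjoint, positive semi-definite operators with closed range. Write $A:=\mathcal J(\Lambda_{\rho,2T})$ and $B:=\tilde{\mathcal J}(\tilde{\Lambda}_{\tilde{\rho}, 2T})$, and set $E:=B-A$. Both $A$ and $B$ are of the form $\mathcal W^*\mathcal W$, hence self-adjoint, and their ranges $R(A)=R(\mathcal W^*_{\rho,T})$ and $R(B)=R(\tilde{\mathcal W}^*_{\tilde\rho,T})$ are closed, as established above. Consequently $A^\dag$ and $B^\dag$ are bounded, and by the estimate preceding this lemma together with the stable observability for $\tilde\rho$ we have $\|A^\dag\|\le C_{obs}^2$ and $\|B^\dag\|\le C_{obs}^2$ as operators on $\boldsymbol L^2((0,T)\times\partial\Omega)$.

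First I would invoke the Wedin identity for the difference of two pseudo-inverses, which in the Hilbert-space setting with closed ranges reads
\begin{align*}
B^\dag-A^\dag=-B^\dag E A^\dag+B^\dag (B^\dag)^* E^* (I-AA^\dag)+(I-B^\dag B) E^* (A^\dag)^* A^\dag.
\end{align*}
The three summands are precisely the source of the constant $3$ in \eqref{im}.

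Second, I would estimate each summand. Since $A$ and $B$ are self-adjoint with closed range, $AA^\dag=P_{R(A)}$ and $B^\dag B=P_{R(B)}$ are orthogonal projections, whence $\|I-AA^\dag\|\le1$ and $\|I-B^\dag B\|\le1$. Using $\|E^*\|=\|E\|$, $\|(A^\dag)^* A^\dag\|=\|A^\dag\|^2$ and $\|B^\dag (B^\dag)^*\|=\|B^\dag\|^2$, submultiplicativity of the operator norm gives
\begin{align*}
\|{-}B^\dag E A^\dag\|&\le\|B^\dag\|\,\|A^\dag\|\,\|E\|,\\
\|B^\dag (B^\dag)^* E^*(I-AA^\dag)\|&\le\|B^\dag\|^2\|E\|,\\
\|(I-B^\dag B)E^*(A^\dag)^* A^\dag\|&\le\|A^\dag\|^2\|E\|.
\end{align*}
Inserting $\|A^\dag\|,\|B^\dag\|\le C_{obs}^2$ and summing the three contributions yields
\begin{align*}
\|B^\dag-A^\dag\|\le\big(C_{obs}^2\cdot C_{obs}^2+C_{obs}^4+C_{obs}^4\big)\|E\|=3C_{obs}^4\|E\|,
\end{align*}
which is exactly \eqref{im}.

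The main obstacle is to justify that the Wedin identity, classically stated for matrices, carries over verbatim to bounded operators on $\boldsymbol L^2((0,T)\times\partial\Omega)$. This rests entirely on the closedness of $R(A)$ and $R(B)$ established above: it guarantees that the pseudo-inverses are bounded and that $AA^\dag,\,A^\dag A,\,BB^\dag,\,B^\dag B$ are genuine bounded orthogonal projections, which is all the algebraic identity uses. If one prefers to avoid citing the identity, an alternative is to verify it directly by expanding $B^\dag E A^\dag$ and inserting the orthogonal decompositions $I=AA^\dag+(I-AA^\dag)$ and $I=B^\dag B+(I-B^\dag B)$, then checking each cross term against the defining Moore--Penrose relations $AA^\dag A=A$, $A^\dag AA^\dag=A^\dag$ and their counterparts for $B$.
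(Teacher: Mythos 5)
Your proposal is correct and is essentially the paper's intended argument: the paper gives no written proof, only citations to Stewart (Theorems 3.2--3.3, the perturbation bound for pseudo-inverses via the Wedin-type decomposition) and Izumino (Lemma 2.1, its extension to bounded operators with closed range on Hilbert space), and your proof is exactly the content of those references, with the three terms of the decomposition producing the constant $3C_{obs}^4$ just as claimed. The only point worth flagging is the one you already note: the bound $\|\tilde{\mathcal J}^\dag(\tilde\Lambda_{\tilde\rho,2T})\|\le C_{obs}^2$ requires stable observability for $\tilde\rho$ with a uniform constant, which is precisely the standing hypothesis in the section where this lemma is applied.
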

\begin{proof}
 The process of proof can be obtained by referring to Theorem 3.2--3.3 in \cite{Stewart1977} and Lemma 2.1 in \cite{Izumino1983}.
\end{proof}

Under above estimates and the assumption of stable observability for elastic wave equation, we derive the following Lipschitz estimate for low frequencies.
\begin{theo}\label{t2}
Let $R>0$ with $\Omega\subset B(0, R)$ and set
  \begin{align*}
\mathcal E:=&\|\tilde{\mathcal J}(\tilde {\Lambda}_{\tilde{\rho}, 2T})-\mathcal J(\Lambda_{\rho, 2T})\|_{\boldsymbol L^2((0, T)\times\partial\Omega)\rightarrow \boldsymbol L^2((0, T)\times\partial\Omega)}\\
 	&+\|\tilde{\Lambda}_{\tilde{\rho}, T}-\Lambda_{\rho, T}\|_{\boldsymbol H^1_{0}((0, T)\times\partial\Omega)\rightarrow \boldsymbol L^2((0, T)\times\partial\Omega)}.
 \end{align*}
 Suppose that the system \eqref{ab} is stably observable for the density $\rho$ in a set $U\subset \mathcal C^\infty(\Omega)$ from $\partial \Omega$ in time $T>0$.
Then for all $\tilde{\rho}\in U$, we have
 \begin{align*}
  \big|\mathcal{F}(\tilde{\rho}-\rho)(\boldsymbol \xi)\big|\leq& Ce^{2R|\boldsymbol \xi|}\mathcal E,
 \end{align*}
 where $\mathcal F(\cdot)(\boldsymbol \xi)$ denotes the Fourier transform of the extension by zero of $\cdot$ onto $\mathbb R^d$ and $C>0$ is a constant depending on $\Omega, T,  \rho$ and $C_{obs}$.
\end{theo}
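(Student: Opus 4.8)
The plan is to substitute the boundary data of the two media into the reconstruction formula \eqref{af} of Theorem \ref{t1} and to estimate the resulting difference. Since only the density is perturbed while the Lam\'e parameters $\mu,\lambda$ are held fixed, the admissible exponents $\boldsymbol\theta$ and amplitude vectors $\boldsymbol\iota$ solving \eqref{cgo} are common to $\rho$ and $\tilde\rho$; because \eqref{cgo} is homogeneous of degree one in $\boldsymbol\iota$, we may normalize $|\boldsymbol\iota|=1$, so that the single CGO pair $\boldsymbol\phi,\boldsymbol\psi$ serves for both media and the prefactor $1/|\boldsymbol\iota|^2$ in \eqref{af} is one. Abbreviating $\mathcal K=\mathcal K(\Lambda_{\rho,T})$, $\tilde{\mathcal K}=\mathcal K(\tilde\Lambda_{\tilde\rho,T})$, $\mathcal J^\dag=\mathcal J^\dag(\Lambda_{\rho,2T})$ and $\tilde{\mathcal J}^\dag=\tilde{\mathcal J}^\dag(\tilde\Lambda_{\tilde\rho,2T})$, applying \eqref{af} to each density and subtracting gives
\begin{align*}
\mathcal F(\tilde\rho-\rho)(\boldsymbol\xi)
=\langle\tilde{\mathcal J}^\dag\tilde{\mathcal K}\boldsymbol\phi,\tilde{\mathcal K}\boldsymbol\psi\rangle
-\langle\mathcal J^\dag\mathcal K\boldsymbol\phi,\mathcal K\boldsymbol\psi\rangle.
\end{align*}

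Next I would telescope this difference of inner products into three pieces,
\begin{align*}
\mathcal F(\tilde\rho-\rho)(\boldsymbol\xi)
=&\langle(\tilde{\mathcal J}^\dag-\mathcal J^\dag)\tilde{\mathcal K}\boldsymbol\phi,\tilde{\mathcal K}\boldsymbol\psi\rangle
+\langle\mathcal J^\dag(\tilde{\mathcal K}-\mathcal K)\boldsymbol\phi,\tilde{\mathcal K}\boldsymbol\psi\rangle\\
&+\langle\mathcal J^\dag\mathcal K\boldsymbol\phi,(\tilde{\mathcal K}-\mathcal K)\boldsymbol\psi\rangle.
\end{align*}
The crucial structural point is that the traces $T_0,T_1$ and the operator $\mathscr I$ in \eqref{E:K} depend only on $\mu,\lambda$, whence $\tilde{\mathcal K}-\mathcal K=(\tilde\Lambda_{\tilde\rho,T}^*-\Lambda_{\rho,T}^*)\mathscr IT_0$ is driven entirely by the perturbation of the traction map. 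The first bracket is then controlled through \eqref{im} by $3C_{obs}^4\|\tilde{\mathcal J}(\tilde\Lambda_{\tilde\rho,2T})-\mathcal J(\Lambda_{\rho,2T})\|$, while the remaining two are controlled by the bound $\|\mathcal J^\dag\|\le C_{obs}^2$ established above together with $\|\tilde\Lambda_{\tilde\rho,T}-\Lambda_{\rho,T}\|$. Since both of these operator norms occur in the definition of $\mathcal E$, each bracket is at most a constant times $\mathcal E$ multiplied by a product of two CGO factors of the form $\|\mathcal K\boldsymbol\phi\|$, $\|\mathcal K\boldsymbol\psi\|$ (or their tilded analogues).

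It then remains to estimate the CGO factors. As $\Omega\subset B(0,R)$ and $\boldsymbol\theta=(\boldsymbol\xi+{\rm i}\boldsymbol\eta)/2$ with $|\boldsymbol\eta|=|\boldsymbol\xi|$, on $\overline\Omega$ we have $|e^{{\rm i}\boldsymbol\theta\cdot\boldsymbol x}|=e^{-\boldsymbol\eta\cdot\boldsymbol x/2}\le e^{|\boldsymbol\xi|R/2}$ and, identically, $|e^{{\rm i}\overline{\boldsymbol\theta}\cdot\boldsymbol x}|\le e^{|\boldsymbol\xi|R/2}$. Each derivative of $\boldsymbol\phi=\boldsymbol\iota e^{{\rm i}\boldsymbol\theta\cdot\boldsymbol x}$ brings down a factor $\boldsymbol\theta$ of size $O(|\boldsymbol\xi|)$, so the traces $T_0\boldsymbol\phi,T_1\boldsymbol\phi$ together with the bounded operators $\mathscr I,\Lambda_{\rho,T}^*$ yield $\|\mathcal K\boldsymbol\phi\|_{\boldsymbol L^2((0,T)\times\partial\Omega)}\le C(1+|\boldsymbol\xi|)^N e^{|\boldsymbol\xi|R/2}$ for a fixed power $N$, and the same bound for $\boldsymbol\psi$. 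The product of two such factors is $C(1+|\boldsymbol\xi|)^{2N}e^{|\boldsymbol\xi|R}$; absorbing the polynomial prefactor via $(1+|\boldsymbol\xi|)^{2N}\le C_R e^{R|\boldsymbol\xi|}$ produces the asserted bound $|\mathcal F(\tilde\rho-\rho)(\boldsymbol\xi)|\le Ce^{2R|\boldsymbol\xi|}\mathcal E$, with $C$ depending only on $\Omega,T,\rho$ and $C_{obs}$.

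The principal difficulty is not a single hard inequality but the careful matching of function spaces. Because $\Lambda_{\rho,T}$ is continuous only as $\boldsymbol H^1_0\to\boldsymbol L^2$ while its $\boldsymbol L^2$-adjoint enters $\mathcal K$, the factor $\mathscr IT_0\boldsymbol\phi$ and its perturbation $(\tilde\Lambda_{\tilde\rho,T}^*-\Lambda_{\rho,T}^*)\mathscr IT_0\boldsymbol\phi$ must be measured in the $\boldsymbol H^1$ norm demanded by $\mathcal E$; this is precisely where the extra polynomial power of $|\boldsymbol\xi|$ (harmlessly absorbed above) arises, and one must check these trace estimates are uniform in $\boldsymbol\xi$. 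A secondary verification is that admissible $\boldsymbol\iota,\boldsymbol\theta$ solving \eqref{cgo} with $|\boldsymbol\iota|=1$ exist for every frequency $\boldsymbol\xi$ (as in the situation of the Remark following the modified Blagove\v{s}\v{c}enski\u{\i} identity), so that the reconstruction formula applies uniformly and the final constant $C$ is independent of $\boldsymbol\xi$ and of $\tilde\rho\in U$.
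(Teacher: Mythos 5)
Your overall strategy coincides with the paper's: apply the reconstruction formula \eqref{af} to both densities, telescope the difference of boundary inner products into three brackets, control the pseudo-inverse difference by \eqref{im}, exploit that $\tilde{\mathcal K}-\mathcal K=(\tilde\Lambda_{\tilde\rho,T}^*-\Lambda_{\rho,T}^*)\mathscr IT_0$ since $T_0, T_1, \mathscr I$ do not depend on the density, and finish with the exponential CGO growth bounds. However, your particular telescoping has a genuine defect. In your first bracket $\langle(\tilde{\mathcal J}^\dag-\mathcal J^\dag)\tilde{\mathcal K}\boldsymbol\phi,\tilde{\mathcal K}\boldsymbol\psi\rangle$ and second bracket $\langle\mathcal J^\dag(\tilde{\mathcal K}-\mathcal K)\boldsymbol\phi,\tilde{\mathcal K}\boldsymbol\psi\rangle$, the operator $\tilde{\mathcal K}=\mathcal K(\tilde\Lambda_{\tilde\rho,T})$ appears as a standalone factor, so you need a bound on $\|\tilde{\mathcal K}\boldsymbol\psi\|=\|\tilde{\mathcal W}^*_{\tilde\rho,T}\boldsymbol\psi\|$. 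The stable observability hypothesis gives a uniform bound on the pseudo-inverse, $\|\tilde{\mathcal W}^\dag_{\tilde\rho,T}\|\le C_{obs}$, but it says nothing about the forward norm $\|\tilde{\mathcal W}^*_{\tilde\rho,T}\|$, which is a continuity (hidden regularity) constant depending on $\tilde\rho$. Your phrase ``or their tilded analogues'' silently assumes these are controlled by the same constants; they are not, so as written your final constant $C$ depends on $\tilde\rho$, contradicting the assertion that $C$ depends only on $\Omega, T, \rho$ and $C_{obs}$. Patching by writing $\tilde{\mathcal K}=\mathcal K+(\tilde{\mathcal K}-\mathcal K)$ inside those factors produces terms of order $\mathcal E^2$ and $\mathcal E^3$, which cannot be absorbed into a bound linear in $\mathcal E$ without additionally assuming $\mathcal E$ bounded. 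Note also that in your second bracket the adjointness trick is unavailable, because $\mathcal J^\dag\tilde{\mathcal K}$ mixes the pseudo-inverse of one medium with the control operator of the other and does not collapse to any $\mathcal W^\dag$.

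The paper's grouping is arranged precisely to avoid this: its three brackets are $\langle(\tilde{\mathcal J}^\dag-\mathcal J^\dag)\mathcal K\boldsymbol\phi,\mathcal K\boldsymbol\psi\rangle$, $\langle\tilde{\mathcal J}^\dag\mathcal K\boldsymbol\phi,(\tilde{\mathcal K}-\mathcal K)\boldsymbol\psi\rangle$ and $\langle\tilde{\mathcal J}^\dag(\tilde{\mathcal K}-\mathcal K)\boldsymbol\phi,\tilde{\mathcal K}\boldsymbol\psi\rangle$. Here the only standalone tilde operators are $\tilde{\mathcal J}^\dag$, bounded by $C_{obs}^2$ uniformly over $U$, and, in the last bracket, the combination $\tilde{\mathcal J}^\dag\tilde{\mathcal K}$: using $(\tilde{\mathcal J}^\dag)^*=\tilde{\mathcal J}^\dag$ one moves it onto $\boldsymbol\psi$ as $\tilde{\mathcal J}^\dag\tilde{\mathcal K}\boldsymbol\psi=\tilde{\mathcal W}^\dag_{\tilde\rho,T}\boldsymbol\psi$, which the observability hypothesis bounds by $C_{obs}$. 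Every remaining factor is either a non-tilde $\mathcal K$ term, bounded by $C_\rho=\|\mathcal W^*_{\rho,T}\|$ (a constant depending only on $\rho$), or a difference carrying a factor of $\mathcal E$. If you regroup your brackets in this way, the rest of your argument --- the identity for $\tilde{\mathcal K}-\mathcal K$, the use of \eqref{im}, the normalization $|\boldsymbol\iota|=1$ (legitimate, since \eqref{cgo} is linear in $\boldsymbol\iota$), and the CGO estimates absorbing the polynomial factors into $e^{2R|\boldsymbol\xi|}$ --- goes through and yields the stated uniform constant.
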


\begin{proof}
We will omit writing $\boldsymbol L^2((0, T)\times\partial\Omega)$ as a subscript.
 Let $\rho, \tilde{\rho}\in U$. Then using Theorem \ref{t1} and the identity \eqref{af}  yields that
 \begin{align}\label{ineq9}
 &|\boldsymbol \iota|^2\big|\mathcal{F}(\tilde{\rho})-\mathcal{F}(\rho)\big|\\
 =&\big|(\boldsymbol \phi,  \boldsymbol \psi)_{\boldsymbol L^2(\Omega; \tilde{\rho}{\rm d}\boldsymbol x)}-(\boldsymbol \phi,  \boldsymbol \psi)_{\boldsymbol L^2(\Omega; \rho {\rm d}\boldsymbol x)}\big|\nonumber\\
 =&\big|\langle\tilde{\mathcal J}^\dag(\tilde {\Lambda}_{\tilde{\rho}, 2T}) \tilde{\mathcal K}(\tilde {\Lambda}_{\tilde{\rho}, T}) \boldsymbol \phi,  \tilde{\mathcal K}(\tilde {\Lambda}_{\tilde{\rho}, T}) \boldsymbol \psi\rangle-\langle\mathcal  J^\dag(\Lambda_{\rho, 2T}) \mathcal K(\Lambda_{\rho, T}) \boldsymbol \phi,  \mathcal K(\Lambda_{\rho, T}) \boldsymbol \psi\rangle\big|\nonumber\\
  \leq&\big|\langle(\tilde{\mathcal J}^\dag(\tilde {\Lambda}_{\tilde{\rho}, 2T})-\mathcal J^\dag(\Lambda_{\rho, 2T}))\mathcal K(\Lambda_{\rho, T})\boldsymbol \phi,  \mathcal K(\Lambda_{\rho, T})\boldsymbol \psi\rangle\big|\nonumber\\
  &+\big|\langle\tilde{\mathcal J}^\dag(\tilde {\Lambda}_{\tilde{\rho}, 2T}) \mathcal K(\Lambda_{\rho, T}) \boldsymbol \phi,  (\tilde{\mathcal K}(\tilde{\Lambda}_{\tilde{\rho}, T})-\mathcal K(\Lambda_{\rho, T}))\boldsymbol \psi\rangle\big|\nonumber\\
  &+\big|\langle\tilde{\mathcal J}^\dag(\tilde {\Lambda}_{\tilde{\rho}, 2T})(\tilde{\mathcal K}(\tilde{\Lambda}_{\tilde{\rho}, T})-\mathcal K(\Lambda_{\rho, T}))\boldsymbol \phi,  \tilde{\mathcal K}(\tilde{\Lambda}_{\tilde{\rho}, T})\boldsymbol \psi\rangle\big|.
 \end{align}
Since
 \begin{align*}
  (\mathscr IT_0\boldsymbol \phi)(t, \boldsymbol x)=(T-t)\boldsymbol \phi(\boldsymbol x),\quad t\in [0, T],~\boldsymbol x\in \partial\Omega,
 \end{align*}
there is $C_{T, \Omega}>0$ depending only on $T$ and $\Omega$ such that
 \begin{align*}
  \|(\tilde{\mathcal K}(\tilde{\Lambda}_{\tilde{\rho}, T})-\mathcal K(\Lambda_{\rho, T}))\boldsymbol \phi\|=&\|(\tilde\Lambda_{\tilde \rho, T}^*-\Lambda_{\rho, T}^*)\mathscr IT_0\boldsymbol \phi\|\\
  \leq&C_{T, \Omega}\|\tilde\Lambda_{\tilde \rho, T}^*-\Lambda_{\rho, T}^*\|_{\boldsymbol H^1_0((0, T)\times\partial\Omega)\rightarrow \boldsymbol L^2((0, T)\times\partial\Omega)}\|\boldsymbol \phi\|_{\boldsymbol{\mathcal C}^1(\partial\Omega)}\\
  =&C_{T, \Omega}\|\tilde \Lambda_{\tilde \rho, T}-\Lambda_{\rho, T}\|_{\boldsymbol H^1_0((0, T)\times\partial\Omega)\rightarrow \boldsymbol L^2((0, T)\times\partial\Omega)}\|\boldsymbol \phi\|_{\boldsymbol {\mathcal C}^1(\partial\Omega)}.
 \end{align*}

Let $C_{\rho}:=\|\mathcal W_{\rho, T}^*\|_{\boldsymbol L^2(\Omega)\rightarrow \boldsymbol L^2((0, T)\times\partial\Omega)}$\textcolor{blue}. Then
 \begin{align}\label{auxi1}
  \|\mathcal K(\Lambda_{\rho, T}) \boldsymbol \phi\|=\|\mathcal W_{\rho, T}^*\boldsymbol \phi\|\leq C_{\rho}\|\boldsymbol \phi\|_{\boldsymbol L^2(\Omega;  \rho {\rm d} \boldsymbol x)}.
 \end{align}
 It follows from \eqref{im} and \eqref{auxi1} that
 \begin{align}\label{a1}
 & \big|\langle(\tilde{\mathcal J}^\dag(\tilde{\Lambda}_{\tilde\rho, 2T})-\mathcal J^\dag(\Lambda_{\rho, 2T}))\mathcal K(\Lambda_{\rho, T}) \boldsymbol \phi, \mathcal K(\Lambda_{\rho, T})  \boldsymbol \psi\rangle\big|\nonumber\\
  &\hspace{0.5cm}\leq3C_{obs}^4
  C_{\rho}^2\|\tilde{\mathcal J}(\tilde{\Lambda}_{\tilde\rho, 2T})-\mathcal J(\Lambda_{\rho, 2T})\|\|\boldsymbol \phi\|_{\boldsymbol L^2(\Omega; \rho {\rm d}\boldsymbol x)}\|\boldsymbol \psi\|_{\boldsymbol L^2(\Omega; \rho{\rm d} \boldsymbol x)},
  \end{align}
  and
  \begin{align}\label{a2}
  &\big|\langle \tilde{\mathcal J}^\dag(\tilde{\Lambda}_{\tilde\rho, 2T})\mathcal K(\Lambda_{\rho, T}) \boldsymbol \phi, (\tilde{\mathcal K}(\tilde{\Lambda}_{\tilde\rho, T}) -\mathcal K(\Lambda_{\rho, T}) )\boldsymbol \psi\rangle\big|\nonumber\\
  &\hspace{0.5cm}\leq C_{obs}^2C_{\rho}C_{T, \Omega}\|\tilde\Lambda_{\tilde\rho, T}-\Lambda_{\rho, T}\|_{\boldsymbol H^1_0((0, T)\times\partial\Omega)\rightarrow \boldsymbol  L^2((0, T)\times\partial\Omega)}\|\boldsymbol \phi\|_{\boldsymbol L^2(\Omega; \rho {\rm d} \boldsymbol x)}\|\boldsymbol \psi\|_{\boldsymbol {\mathcal C}^1(\partial\Omega)}.
 \end{align}
Observe that $(\tilde{\mathcal J}^\dag(\tilde{\Lambda}_{\tilde\rho, 2T}))^*=\tilde{\mathcal J}^\dag(\tilde{\Lambda}_{\tilde\rho, 2T})$. Then
 \begin{align}\label{a3}
  &\big|\langle \tilde{\mathcal J}^\dag(\tilde{\Lambda}_{\tilde\rho, 2T})(\tilde{\mathcal K}(\tilde{\Lambda}_{\tilde\rho, T})-\mathcal K(\Lambda_{\rho, T}))\boldsymbol \phi, \tilde{\mathcal K} (\tilde{\Lambda}_{\tilde\rho, T})\boldsymbol \psi\rangle
  \big|\nonumber\\
  =&\big|\langle(\tilde{\mathcal K}(\tilde{\Lambda}_{\tilde\rho, T})-\mathcal K(\Lambda_{\rho, T}))\boldsymbol \phi, \tilde{\mathcal W}_{\tilde\rho, T}^\dag \boldsymbol \psi\rangle\big|\nonumber\\
  \leq& C_{T, \Omega}C_{obs}\|\tilde\Lambda_{\tilde \rho, T}-\Lambda_{\rho, T}\|_{\boldsymbol H^1_0((0, T)\times\partial\Omega)\rightarrow \boldsymbol L^2((0, T)\times\partial\Omega)}\|\boldsymbol \phi\|_{\boldsymbol{\mathcal{C}}^1(\partial\Omega)}\|\boldsymbol \psi\|_{\boldsymbol L^2(\Omega; \tilde{\rho}{\rm d} \boldsymbol x)}.
 \end{align}
Substituting \eqref{a1}--\eqref{a3} into \eqref{ineq9} shows that
 \begin{align*}
  &\big|(\boldsymbol \phi, \boldsymbol \psi)_{\boldsymbol L^2(\Omega; \tilde{\rho} {\rm d}\boldsymbol x)}-(\boldsymbol \phi, \boldsymbol \psi)_{\boldsymbol L^2(\Omega; \rho{\rm d} \boldsymbol x)}\big|\nonumber\\
  \leq& C(\|\tilde{\mathcal J}(\tilde{\Lambda}_{\tilde\rho, 2T})-\mathcal J(\Lambda_{\rho, 2T})\|_{\boldsymbol L^2((0, T)\times\partial\Omega)\rightarrow \boldsymbol L^2((0, T)\times\partial\Omega)}\nonumber\\
  &+\|\tilde\Lambda_{\tilde \rho, T}-\Lambda_{\rho, T}\|_{\boldsymbol H^1_0((0, T)\times\partial\Omega)\rightarrow \boldsymbol L^2((0, T)\times\partial\Omega)})\|\boldsymbol \phi\|_{\boldsymbol{\mathcal C}^1(\Omega)}\|\boldsymbol \psi\|_{\boldsymbol{\mathcal C}^1(\Omega)},
 \end{align*}
where the constant $C>0$ depends on $\Omega, T, \rho$ and $C_{obs}$.

Taking the derivative  of $\boldsymbol \phi:=\boldsymbol\iota  e^{{\rm i}\boldsymbol \theta\cdot \boldsymbol x}$ with respect to $\boldsymbol x$,
 we have
 \[ |\partial_{x_i}\boldsymbol \phi(\boldsymbol x)|\leq 2|\boldsymbol\iota||\boldsymbol\xi|e^{R|\boldsymbol\xi|/2}\leq C_R |\boldsymbol\iota| e^{R|\boldsymbol\xi|},\]
 where $R$ is the radius of a ball $B(0, R)$ that contains $\Omega$, and $C_R>0$ is a constant depending only on $R$.
 Thus,
 \begin{align*}
  |\boldsymbol \iota|^2\big|\mathcal{F}(\tilde{\rho}-\rho)(\boldsymbol\xi)\big|\leq C|\boldsymbol\iota|^2e^{2R|\boldsymbol\xi|}(&\|\tilde{\mathcal J}(\tilde{\Lambda}_{\tilde\rho, 2T})-\mathcal J(\Lambda_{\rho, 2T})\|_{\boldsymbol L^2((0, T)\times\partial\Omega)\rightarrow \boldsymbol L^2((0, T)\times\partial\Omega)}\\
  &+\|\tilde{\Lambda}_{\tilde \rho, T}-\Lambda_{\rho, T}\|_{\boldsymbol H^1_{0}((0, T)\times\partial\Omega)\rightarrow \boldsymbol L^2((0, T)\times\partial\Omega)}).
 \end{align*}
The proof is complete.
\end{proof}

We now give a logarithmic stability result for high frequencies.
Before that, we need an additional assumption on $\rho$. We take the extension by zero of $\rho$ onto $\mathbb R^d$ and still remark as $\rho$. Define the space as follows
\begin{align*}
 \mathscr Q:&=\left\{\rho\in H^{s+2}(\mathbb R^d), ~\|\rho\|_{H^{s+2}(\mathbb R^d)}\leq C,  ~\text{for some }s\geq0 ~\text{and}~ C>0\right\}.
\end{align*}
\begin{coro}
 Under the same assumptions of  Theorem \ref{t2}.
 If $\mathcal E$ is sufficiently small and $ \rho, \tilde\rho\in \mathscr Q$, then
  \begin{align*}
  \|\tilde{\rho}-\rho\|_{\boldsymbol L^2(\Omega)}\leq C\big(-\ln \mathcal E\big)^{-2}.
 \end{align*}
\end{coro}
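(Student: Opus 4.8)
The plan is to combine the low-frequency Lipschitz bound of Theorem~\ref{t2} with the high-frequency decay furnished by the a priori regularity $\rho,\tilde\rho\in\mathscr Q$, through a Plancherel identity and a frequency-splitting argument. Writing $\delta\rho:=\tilde\rho-\rho$ (extended by zero to $\mathbb R^d$), the Plancherel theorem gives $\|\delta\rho\|_{\boldsymbol L^2(\Omega)}^2\le\|\delta\rho\|_{L^2(\mathbb R^d)}^2=c_d\int_{\mathbb R^d}|\mathcal F(\delta\rho)(\boldsymbol\xi)|^2\,{\rm d}\boldsymbol\xi$ for a dimensional constant $c_d$. I would then fix a cutoff radius $r>0$, to be chosen at the very end, and split this integral into the ball $\{|\boldsymbol\xi|\le r\}$ and its complement.

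On the low-frequency region $\{|\boldsymbol\xi|\le r\}$ I would insert the estimate $|\mathcal F(\delta\rho)(\boldsymbol\xi)|\le Ce^{2R|\boldsymbol\xi|}\mathcal E$ from Theorem~\ref{t2}; since $e^{4R|\boldsymbol\xi|}\le e^{4Rr}$ there, integrating over $B(0,r)$ yields a contribution of the form $C_1 r^{d}e^{4Rr}\mathcal E^2$. On the high-frequency region $\{|\boldsymbol\xi|>r\}$ I would discard the data entirely and use only regularity: writing $|\mathcal F(\delta\rho)|^2=(1+|\boldsymbol\xi|^2)^{-(s+2)}(1+|\boldsymbol\xi|^2)^{s+2}|\mathcal F(\delta\rho)|^2$ and using $(1+|\boldsymbol\xi|^2)^{-(s+2)}\le r^{-2(s+2)}$ for $|\boldsymbol\xi|>r$, the remaining factor integrates to $\|\delta\rho\|_{H^{s+2}(\mathbb R^d)}^2\le(2C)^2$ by the definition of $\mathscr Q$ and the triangle inequality. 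This produces a contribution $C_2 r^{-2(s+2)}$. Altogether,
\begin{align*}
\|\delta\rho\|_{\boldsymbol L^2(\Omega)}^2\le C_1 r^{d}e^{4Rr}\mathcal E^2+C_2 r^{-2(s+2)}.
\end{align*}

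It remains to optimize in $r$. The two terms compete, the first growing exponentially in $r$ and the second decaying polynomially, so I would choose $r$ logarithmically in $\mathcal E$, namely $r=\tfrac{1}{4R}(-\ln\mathcal E)$, which is admissible and large once $\mathcal E$ is small enough. With this choice $e^{4Rr}\mathcal E^2=\mathcal E$, so the first term becomes $C_1 r^{d}\mathcal E=C(-\ln\mathcal E)^{d}\mathcal E$, which decays faster than any negative power of $-\ln\mathcal E$ and is therefore dominated by the second term. The second term equals $C(-\ln\mathcal E)^{-2(s+2)}$, and since $s\ge0$ we have $2(s+2)\ge4$, whence $\|\delta\rho\|_{\boldsymbol L^2(\Omega)}^2\le C(-\ln\mathcal E)^{-4}$; taking square roots gives the claimed rate $(-\ln\mathcal E)^{-2}$.

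The substantive step, and the one I expect to require the most care, is this balancing of the two contributions: one must check that the logarithmic choice of $r$ simultaneously tames the exponential constant $e^{4R|\boldsymbol\xi|}$ inherited from Theorem~\ref{t2}, so that the low-frequency term is absorbed, while still extracting a genuine polynomial gain in $-\ln\mathcal E$ from the Sobolev decay. The two standing hypotheses are used precisely here: ``$\mathcal E$ sufficiently small'' guarantees that $-\ln\mathcal E$ is large enough for $r$ to lie in the useful range and for the first term to be dominated, and ``$\rho,\tilde\rho\in\mathscr Q$'' supplies the uniform $H^{s+2}$ bound driving the high-frequency estimate. The exponent $-2$ corresponds to the worst admissible case $s=0$, and any larger $s$ would only improve it.
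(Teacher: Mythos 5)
Your proposal is correct and follows essentially the same route as the paper: a frequency splitting of $\|\mathcal F(\tilde\rho-\rho)\|_{L^2(\mathbb R^d)}^2$ at a cutoff chosen logarithmically in $\mathcal E$, with the low frequencies controlled by the Lipschitz bound of Theorem~\ref{t2} and the high frequencies by the uniform $H^{s+2}$ bound from $\mathscr Q$. The only (immaterial) difference is bookkeeping: the paper takes $\gamma=-\frac{1}{4R+1}\ln\mathcal E$ and absorbs the polynomial factor via $\gamma^d\le d!\,e^{\gamma}$ so the low-frequency term becomes exactly $C\mathcal E$, whereas you keep the factor $r^{d}$ and note that $(-\ln\mathcal E)^{d}\mathcal E$ is eventually dominated by $(-\ln\mathcal E)^{-4}$.
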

\begin{proof}
One has that for $\gamma> 2$,
\begin{align}\label{ineq11}
	\|(\tilde\rho-\rho)(\boldsymbol x)\|^2_{\boldsymbol L^2(\Omega)}\leq\|\mathcal F(\tilde\rho-\rho)(\boldsymbol \xi)\|^2_{\boldsymbol L^2(\mathbb R^d)}
	=P_1+P_2.
\end{align}
where
\begin{align*}
P_1:=\int_{|\boldsymbol\xi|\leq \gamma}|\mathcal F(\tilde\rho-\rho)(\boldsymbol \xi)|^2 {\rm d}\boldsymbol \xi,\quad P_2:=\int_{|\boldsymbol\xi|>\gamma}|\mathcal F(\tilde\rho-\rho)(\boldsymbol \xi)|^2 {\rm d}\boldsymbol \xi.
\end{align*}
Note that $\gamma^d\leq d!e^\gamma$ and $\rho, \tilde \rho\in \mathscr Q$. By a direct calculation, we have
\begin{align}\label{P1}
P_1\leq& C\int_{|\boldsymbol \xi|\leq \gamma}e^{4R|\boldsymbol\xi|}\mathcal E^2 {\rm d}\boldsymbol \xi\leq C\gamma^de^{4R\gamma}\mathcal E^2
\leq Ce^{4R\gamma+\gamma}\mathcal E^2,
\end{align}
and
\begin{align}\label{P2}
	P_2=&\int_{|\boldsymbol\xi|>\gamma}\frac{1}{(1+|\boldsymbol \xi|^2)^{s+2}}(1+|\boldsymbol \xi|^2)^{s+2}|\mathcal{F}(\tilde{\rho}-\rho)(\boldsymbol\xi)|^2 {\rm d}\boldsymbol \xi
	\leq \frac{C}{1+\gamma^{2s+4}}\leq\frac{C}{\gamma^{4}}.
\end{align}
Substituting \eqref{P1} and \eqref{P2} into \eqref{ineq11} yields that
\begin{align}\label{ineq11'}
	\|(\tilde\rho-\rho)(\boldsymbol x)\|^2_{\boldsymbol L^2(\Omega)}
\leq&C\big(\frac{1}{\gamma^4}+e^{4R\gamma+\gamma}\mathcal E^2\big).
\end{align}

Let $\delta>0$ with
	$\mathcal E< \delta\leq e^{-8R-2}$.
We take
\begin{align*}
\gamma: =-\frac{1}{4R+1}\ln\mathcal E.
\end{align*}
It is clear that $\gamma>2$. By substituting the expression of $\gamma$ into \eqref{ineq11'}, we obtain
\begin{align*}
	\|(\tilde\rho-\rho)(\boldsymbol x)\|^2_{\boldsymbol L^2(\Omega)}\leq C\big((-\ln\mathcal E)^{-4}+\mathcal E\big)\leq C\big(-\ln\mathcal E\big)^{-4}.
\end{align*}
This proof is complete.
\end{proof}

\section*{Acknowledgment}
The second    author expresses deep gratitude to Prof. Peijun Li  for very  valuable  discussions.



\end{document}